\documentclass[12pt,a4paper]{article}

\usepackage{amsmath,amstext,amssymb,amscd,euscript}
 \usepackage{graphicx}

\usepackage[russian,english]{babel}
\usepackage[cp1251]{inputenc}

\newcommand{\Xcomment}[1]{}
\newtheorem{theorem}{Theorem}[section]
\newtheorem{lemma}[theorem]{Lemma}

\newenvironment{proof}{\noindent{\bf Proof}\/}%
{\hfill$\qed$\medskip}

\def\qed{\Box}

\makeatletter \@addtoreset{equation}{section} \makeatother

\newenvironment{numitem1}{\refstepcounter{equation}\begin{enumerate}%
\item[(\thesection.\arabic{equation})]}{\end{enumerate}}

\newcommand{\refeq}[1]{(\ref{eq:#1})}  

 \makeatletter
\renewcommand{\section}{\@startsection{section}{1}{0pt}%
{-3.5ex plus -1ex minus -.2ex}{2.3ex plus .2ex}%
{\normalfont\Large}}
 \makeatother

 \makeatletter
\renewcommand{\subsection}{\@startsection{subsection}{2}{0pt}%
{-3.0ex plus -1ex minus -.2ex}{1.5ex plus .2ex}%
{\normalfont\normalsize\bf}}
 \makeatother

 \newcommand{\SEC}[1]{\ref{sec:#1}}  
\newcommand{\SSEC}[1]{\ref{ssec:#1}}  

\def\Rset{{\mathbb R}}

\def\Zset{{\mathbb Z}}

\def\Ascr{{\cal A}}
\def\Bscr{{\cal B}}
\def\Cscr{{\cal C}}

\def\Escr{{\cal E}}
\def\Fscr{{\cal F}}
\def\Gscr{{\cal G}}

\def\Mscr{\EuScript{M}}

\def\Rscr{{\cal R}}
\def\Sscr{{\cal S}}
\def\Tscr{{\cal T}}

\def\Vscr{{\cal V}}
\def\Wscr{{\cal W}}
\def\Xscr{{\cal X}}
\def\Yscr{{\cal Y}}
\def\Zscr{{\cal Z}}

\def\hat{\widehat}
\def\bar{\overline}
\def\eps{\varepsilon}

\def\xmin{x^{\rm min}}
\def\xmax{x^{\rm max}}

\def\bmax{b^{\rm max}}

\def\Sscrpr{\Sscr^{\rm pr}}
\def\precpr{\prec^{\rm pr}}
\def\phipr{\phi^{\rm pr}}

\def\onebf{{\bf 1}}

\def\trleft{\vartriangleleft}
\def\trlefteq{\trianglelefteq}

\begin{document}
\parskip=2pt

\title{On splitting properties of the stability problem with integer choice functions }
\author{Alexander V.~Karzanov
\thanks{Central Institute of Economics and Mathematics of the RAS, 47, Nakhimovskii Prospect, 117418 Moscow, Russia; email: akarzanov7@gmail.com.}
}
\date{}

 \maketitle
\vspace{-1.0cm}
\begin{abstract}
We consider the integer version of Alkan--Gale's model on stability in a two-sided market, called the \emph{stable generalized allocation} one. It is given by a triple $(G,b,C)$, where $G=(V,E)$ is a finite bipartite graph with nonnegative integer \emph{capacities} $b(e)\in\Zset_+$ of edges $e\in E$, and for each vertex (``agent'') $v\in V$, the preferences on the set $E_v$ of its incident edges depend on a \emph{choice function} $C_v$. The latter acts on the set of vectors in $\Zset_+^{E_v}$ bounded by the capacities and obeys the standard axioms of \emph{substitutability} and \emph{size monotonicity}. Alkan--Gale's prominent theorem implies that the stability problem in this case always has a stable solution $x\in\Zset_+^E$ and, moreover, the set $\Sscr_{G,b,C}$ of these solutions (``stable generalized allocations'') forms a distributive lattice.

However, this lattice is rather intricate to construct and work with, and we wonder whether it can be represented via a ``simpler'' stability model. Answering this issue, we arrange a sort of splitting techniques to embed $\Sscr_{G,b,C}$, as a sublattice, in the lattice of stable matchings and, more compactly, in the lattice of stable allocations (as in Baiou--Balinski's stability model). This generalizes Fleiner's result on a detachment in the special case with all-unit capacities.

\bigskip

\noindent\emph{Keywords}: stable marriage, stable allocation, choice function, rotation, distributive lattice, poset representation
\end{abstract}


\section{Introduction}  \label{sec:intr}

The appearance of the famous work on stable marriages due to Gale and Shapley~\cite{GS} has inspired a flow of subsequent valuable researches on stable assignments in two-sided markets. One popular stability model on this way was introduced and studied by Baiou and Balinski~\cite{BB}, known as the \emph{stable allocation problem} (briefly \emph{SAP}). In its setting (in the real-valued version), there is given a finite bipartite graph $G=(V,E)$ in which the edges $e\in E$ have nonnegative real upper bounds, or \emph{capacities} $b(e)\in\Rset_+$, the vertices $v\in V$ have \emph{quotas} $q(v)\in\Rset_+$, and each vertex (``agent'') $v$ is endowed with a linear order (of ``preferences'') on the set $E_v$ of its incident edges, serving to compare possible ``allocations'' within $E_v$ in a natural way.

Alkan and Gale~\cite{AG} proposed a far generalization of SAP and other known bipartite stability problems (such as of types ``one-to-many'', ``many-to-many'', and the like). It deals with a bipartite graph $G=(V,E)$ and a vector $b\in\Rset_+^E$ of edge capacities for which ``preferences'' at each vertex $v\in V$ depend on a \emph{choice function} (CF) $C_v$ that acts on the vectors in a closed subset of the real box $\{z\in\Rset_+^{E_v}\colon z(e)\le b(e)\;\forall e\in E_v\}$. Each $C_v$ is assumed to obey the well-known axioms of \emph{substitutability} and \emph{size monotonicity}. The main results in~\cite{AG} are that this model guarantees the existence of a stable assignment and that the set of these forms a distributive lattice. In addition, the important property of \emph{unisizeness} was established, saying that for each vertex $v\in V$, the value $\sum(x(e)\colon e\in E_v)$ is the same for all stable assignments $x\in\Rset_+^E$.

An extensive study of the integer version of Alkan--Gale's model was recently conducted in~\cite{karz}. In that paper one assumes that the capacities $b$ are integral and each choice function $C_v$ acts on the entire integer box $\Bscr_v=\{z\in\Zset_+^{E_v}\colon z(e)\le b(e)\;\forall e\in E_v\}$; we will refer to this version as the \emph{integer Alkan--Gale's problem}, or the \emph{stable generalized allocation problem} (briefly \emph{SGAP}). Stable assignments in this model are called stable \emph{generalized allocations}, or stable \emph{g-allocations} for short, and the set of these is denoted by $\Sscr=\Sscr_{G,b,C}$. Following~\cite{AG}, $\Sscr$ admits a natural partial order $\prec$, turning it into a distributive lattice. Also for each vertex $v\in V$, we denote by $\theta(v)$ the value $\sum(x(e)\colon e\in E_v)$ where $x\in \Sscr$ (by the unisizeness property, this value does not depend on $x$). Two groups of main results on SGAP were established in~\cite{karz}.

The first group concerns local properties of the lattice $(\Sscr,\prec)$. One shows that for any stable $x$, each of its immediate successors $x'$ in this lattice can be obtained by an augmentation w.r.t. a certain edge-simple cycle $R$ in $G$, called a \emph{rotation} applicable to $x$. These rotations for $x$ are pairwise edge-disjoint and can be found in strongly polynomial time. (Hereinafter, speaking of computational complexity, we assume that each choice function $C_v$ is given via an \emph{oracle} that, being asked of a vector $z\in\Bscr_v$, outputs the vector $C_v(z)$. We liberally assume that this operation (an \emph{oracle call}) takes $O(1)$ time.) A rotation $R$ can be applied, step by step, several times, giving a sequence $x=x_0,x_1,\ldots,x_k$
of stable g-allocations, where each $x_i$ immediately succeeds $x_{i-1}$; in this case we say that $R$ can be applied to $x$ with the \emph{feasible weight} $k$. One shows that to find the maximal feasible weight for $(x,R)$, denoted by $\tau_R(x)$, takes a pseudo polynomial time, namely, $O(\bmax|E|^2)$, where $\bmax$ denotes $\max(b(e)\colon e\in E)$.

The second group involves more global properties, concerning an explicit poset representation of the lattice $(\Sscr,\prec)$. On this way, one shows that there exists, and can be constructed in pseudo polynomial time, a poset $(\Pi,\lessdot)$ with the following properties: (a) each element of $\Pi$ is a pair $(R,\tau)$, where $R$ is a rotation applicable to some stable $x\in\Sscr$ with the maximal weight $\tau$ (i.e. $\tau=\tau_R(x)$); and (b) $(\Sscr,\prec)$ is isomorphic to the lattice of closed functions for $(\Pi,\lessdot)$. Here a function $\lambda:\Pi\to\Zset_+$ is called \emph{closed} if it is bounded by the weights $\tau$ (i.e. $\lambda(R,\tau)\le \tau$ for each $(R,\tau)\in\Pi$), and for any $(R,\tau),(R',\tau')\in\Pi$ satisfying $(R,\tau)\lessdot(R',\tau')$ and $\lambda(R',\tau')>0$, there holds $\lambda(R,\tau)=\tau$. An important fact is that all elements of $\Pi$ can be found by constructing an arbitrary sequence $\Tscr=(x_0,x_1,\ldots,x_N)$ such that $x_0$ and $x_N$ are the minimal and maximal element in $(\Sscr,\prec)$, respectively, and for each $i=1,\ldots,N$, $x_i$ is obtained from $x_{i-1}$ by applying a rotation $R_i$ with the maximal feasible weight $\tau_i$; such a $\Tscr$ is called a \emph{full route} in~\cite{karz}. Then $\Pi$ is just the family $\{(R_i,\tau_i), i=1,\ldots,N\}$ (where $(R_i,\tau_i)=(R_j,\tau_j)$ for some $i\ne j$ is possible). So the construction of $\Pi$ looks rather straightforward. As to the order relation $\lessdot$, the task of computing it attracts more sophisticated tools (so as to construct the poset $(\Pi,\lessdot)$ in pseudo polynomial time).

In light of this, it is tempting to try to represent the lattice $(\Sscr,\prec)$ for SGAP in an easier way, by attracting a ``simpler'' stability model. In this paper we resolve this issue by working out some splitting techniques. Two constructions will be applied. Let $W$ (``workers'') and $F$ (``firms'') be the vertex parts (color classes) in $G$.

In the first construction, we split each vertex $v\in V$ into $\max\{\theta(v),1\}$ copies, and split each edge $e\in E$ with endvertices $u$ and $v$ into $b(e)$ edges connecting some copies of $u$ with some copies of $v$ (admitting multiple edges), assigning a linear order $\trleft_e$ on these edges. This produces a multigraph $\Gscr=(\Vscr,\Escr)$ and determines the graph homomorphism $h:\Gscr\to G$ mapping the vertices and edges of $\Gscr$ to their original ones in $G$. The map $h$ generates a correspondence between values on edges of $G$ and tuples of edges in $\Gscr$; namely, for an edge $e\in E$ and a number $\alpha\in\Zset_+$ not exceeding $b(e)$, we denote by $h^{-1}(e,\alpha)$ the set of $\alpha$ edges in $h^{-1}(e)$ smaller by $\trleft_e$. In addition, for each vertex $\nu\in\Vscr$, we assign a certain linear order $>_\nu$ on the edges of $\Gscr$ incident to $\nu$.

The obtained bipartite graph $\Gscr$ with the vertex parts $h^{-1}(W)$ and $h^{-1}(F)$ together with the orders $>_\nu$ for $\nu\in\Vscr$ forms an instance of a (multigraph) version of the classical stable marriage problem by Gale and Shapley.  The multigraph $\Gscr$ has at most $\sum(\theta(v)\colon v\in V)+|V|$ vertices and $\sum(b(e)\colon e\in E)$ edges, and we are able to arrange $(\Gscr,\trleft,>)$ in such a way that the lattice $(\Sscr,\prec)$ is embedded as a sublattice into the lattice of stable matchings for $(\Gscr,>)$. More precisely, for each $x\in\Sscr$, the desired stable matching is defined to be the union of sets $h^{-1}(e,x(e))$ over all $e\in E$. See Theorem~\ref{tm:split}. Note that this embedding need not be an isomorphism, i.e. stable matchings for $(\Gscr,>)$ not corresponding to stable g-allocations for $(G,b,C)$ are possible. When all capacities are ones, a similar result (in terms of ``detachments'') was obtained in~\cite[Th.~5.3]{flein}. 

In the second, more compact, construction, we apply an \emph{aggregated} splitting techniques which transforms SGAP with $(G,b,C)$ into a stable allocation problem on a multigraph $\hat\Gscr=(\hat\Vscr,\hat\Escr)$ with edge capacities $\hat b$, vertex quotas $\hat q$, and linear orders $\hat>_\nu$ on the incident edges of vertices $\nu\in\hat\Vscr$. The aim is to provide that each stable g-allocation for $(G,b,C)$ turns into a stable (usual) allocation for $(\hat\Gscr,\hat b,\hat q,\hat >)$. In general, the size of $\hat\Gscr$ becomes smaller compared with that of $\Gscr$ in the previous construction; we show that $|\hat\Vscr|$ is at most $2|E|$ and $|\hat \Escr|$ is estimated as $O(N^2|E|)$, where $N$ is the length of a full route for $(G,b,C)$. See Theorem~\ref{tm:aggregate_split} and the last paragraph in Sect.~\SEC{aggregate}.

This paper is organized as follows. Section~\SEC{back} contains basic definitions and backgrounds. Section~\SEC{split} explains what we mean under splitting in the first construction and state the main result on it, in Theorem~\ref{tm:split}. A review of results from~\cite{karz} needed for further considerations (involving rotations, routes, poset representation) is given in Sect.~\SEC{review}. Using this, we prove Theorem~\ref{tm:split} in Sect.~\SEC{proof}. The final Sect.~\SEC{aggregate} is devoted to the construction of aggregated splitting and proves the main result on it (Theorem~\ref{tm:aggregate_split}).


\section{Backgrounds}  \label{sec:back}



In this section we review some basic definitions and facts from~\cite{AG} needed to us.

We consider a bipartite graph $G=(V,E)$ in which the vertex set $V$ is
partitioned into two parts (independent sets, color classes) $W$ and $F$, conditionally called the sets of \emph{workers} and \emph{firms}, respectively. The edges $e\in E$ have nonnegative integer \emph{capacities} $b(e)\in\Zset_{+}$.
Unless explicitly said otherwise, we assume that $G$ has no multiple edges. The edge connecting vertices $w\in W$ and $f\in F$ may be denoted as $wf$.

For a vertex $v\in V$, the set of its incident edges is denoted by $E_v$. We write $\Bscr=\Bscr^{(b)}$ for the nonnegative \emph{integer} box $\{x\in \Zset_+^E\colon x\le b\}$. For $v\in V$, the restriction of $\Bscr$ to the set $E_v$ is denoted by $\Bscr_v$.
\medskip

$\bullet$ (\textbf{choice functions}) Each vertex (``agent'') $v\in V$ can prefer one vector in $\Bscr_v$ to another. The preferences depend on a \emph{choice function} (CF) related to $v$. This is a map $C=C_v$ of $\Bscr_v$ into itself satisfying $C(z)\le z$ for all $z\in \Bscr_v$. Also $C$ obeys two additional axioms. One of them considers $z,z'\in\Bscr_v$ and requires that:
  \begin{itemize}
  %
\item[(A1)] if $z\ge z'$, then $C(z)\wedge z'\le C(z')$ (\emph{substitutability}, or
\emph{persistence}).
  \end{itemize}

\noindent(Hereinafter, for $a,b\in\Rset^S$, the functions $a\wedge b$ and $a \vee b$  take the values $\min\{a(e),b(e)\}$ and $\max\{a(e),b(e)\}$, $e\in S$, respectively.)

The other axiom imposed on $C=C_v$ for each $v\in V$ is called the \emph{size monotonicity} condition; it requires that
 \begin{itemize}
 \item[(A2)] if $z\ge z'$, then $|C(z)|\ge |C(z')|$.
 \end{itemize}
 
\noindent(Hereinafter, for a numerical function $a$ on a finite set $S$, we write $a(S)$ for $\sum(a(e) : e\in S)$, and $|a|$ for $\sum(|a(e)|\colon e\in S)$. In particular,
$a(S)=|a|$ if $a$ is nonnegative.)

A special case of~(A2) is the condition of \emph{quota filling}; it is applied when a vertex $v\in V$ is endowed with a \emph{quota} $q(v)\in \Zset_+$  and reads as:
 \begin{itemize}
 \item[(A3)]  any $z\in\Bscr_v$ satisfies $|C(z)|=\min\{|z|,q(v)\}$.
 \end{itemize}

\noindent\textbf{Example 1.} 
A popular instance of choice functions $C_v$ obeying axioms (A1) and (A3) is generated by a \emph{linear order} $>_v$ on the set $E_v$. Here
for $e,e'\in E_v$ with $e>_v e'$, the ``agent'' $v$ is said to prefer the edge (``contract'') $e$ to $e'$. Then $C_v$ related to  the order $>_v$ and a quota $q(v)\in\Zset_+$ is defined by the following rule: for $z\in\Bscr_v$, if $|z|\le q(v)$, then $C_v(z):=z$; and if $|z|> q(v)$, then, renumbering the edges in $E_v$ as $e_1,\ldots,e_{|E_v|}$ so that $e_i>_v e_{i+1}$ for each $i$, take the maximal $j$ satisfying $r:=\sum(z(e_i)\colon i\le j)\le q(v)$ and define $C_v(z)$ to be $(z(e_1),\ldots,z(e_j),q(v)-r,0,\ldots,0)$.
 \medskip

$\bullet$ (\textbf{stability}) For a vertex $v\in V$ and a vector (function, assignment) $x$ on $E$, let $x_v$ denote the restriction of $x$ to  $E_v$. A vector $z\in \Bscr_v$ is called  \emph{acceptable} if $C_v(z)=z$; the set of such vectors is
denoted by $\Ascr_v$. This notion is extended to $\Bscr$; namely, we
say that $x\in\Bscr$ is (globally) acceptable if $x_v\in\Ascr_v$ for all $v\in V$. The set of acceptable vectors in $\Bscr$ is denoted by $\Ascr$.

For any $v\in V$, the CF $C_v$ establishes preference relations on acceptable vectors on $E_v$; namely, $z\in\Ascr_v$ is said to be \emph{preferred} to $z'\in\Ascr_v-\{z\}$ if
   \begin{equation} \label{eq:zzp}
   C_v(z\vee z')=z,
   \end{equation}
denoted as $z'\prec_v z$. The relation $\prec_v$ is shown to be transitive.

The preferences between acceptable functions on the sets $E_v$, $v\in
V$, are extended, in a natural way, to the whole $E$. Namely, for the ``firm'' part $F$ in $V$ and distinct $x,y\in \Ascr$, we write $x\prec_F y$ if $x_f\preceq_f y_f$ holds for all $f\in F$. The preferences in $\Ascr$ relative to the
``worker'' part  $W$ are defined in a similar way and denoted via $\prec_{\,W}$.

Given $G$, $b$  and $C_v$ ($v\in V$) as above, we will refer to functions $x\in\Ascr$ on $E$ as (acceptable) \emph{generalized allocation}, or \emph{g-allocations} for short. 
 \medskip

\textbf{Definition 1.} Let $v\in V$ and $z\in\Ascr_v$. Using terminology from~\cite{karz}, we say that an edge $e\in E_v$ is \emph{interesting} for $v$ under $z$ if there exists $z'\in\Bscr_v$ such that
  \begin{equation} \label{eq:inter_e}
  z'(e)>z(e),\quad \mbox{$z'(e')=z(e')$ for all $e'\ne e$,}\quad \mbox{and $C_v(z')(e)>z(e)$}
  \end{equation}
(the term ``interesting'' is justified by an expectation that an increase at $e$ can give rise to a better assignment for $v$). Extending this to g-allocations on $E$, we say that an edge $e=wf\in E$ is \emph{interesting} for a vertex (``agent'') $v\in\{w,f\}$ under a g-allocation $x\in\Ascr$ if so is for $v$ and $x_v$. If  $e=wf\in E$
is interesting under $x$ for both vertices $w$ and $f$, then the edge $e$ is
called \emph{blocking} for $x$. A g-allocation $x\in \Ascr$ is called
\emph{stable} if no edge in $E$ blocks $x$. The set of stable g-allocations 
is denoted by $\Sscr=\Sscr_{G,b,C}$.
\medskip

$\bullet$ ~For $v\in V$, the set $\Ascr_v$ endowed with the preference relation
$\succ_v$ turns into a \emph{lattice}. In this lattice, for $z,z'\in\Ascr_v$, the
least upper bound (join) $z\curlyvee z'$ is expressed as $C_v(z\vee z')$, and
the greatest lower bound (meet) $z\curlywedge z'$ is expressed as $C_v(\bar z\wedge \bar z')$, where $\bar y$ is the \emph{closure} of $y\in\Ascr_v$, defined as $\sup(y'\in\Bscr_v\colon C_v(y')=y)$.
 \medskip

$\bullet$ ~In a general setting, Alkan--Gale's model in~\cite{AG} deals with a bipartite graph $G=(V,E)$ and domains $\Bscr_v$ ($v\in V$) that are closed subsets of the real boxes $\{z\in \Rset_+^{E_v}\colon z(e)\le b(e)\; \forall e\in E_v\}$ for some $b\in \Rset_+^E$. Also one assumes that each choice function $C_v$ ($v\in V$) acting on $\Bscr_v$ is continuous and obeys the axioms as above. So our model (or problem) of stable g-allocations is an ``integer version'' of that; we denote it as \emph{SGAP}. General results in~\cite{AG} imply that the set $\Sscr$ of stable g-allocations in SGAP possesses the following nice properties that will be important to us in what follows:
 \begin{numitem1} \label{eq:AG}
 \begin{itemize}
\item[(a)] $\Sscr$ is nonempty, and  $(\Sscr,\prec_F)$ is a
\emph{distributive} lattice (cf.~\cite[Ths.~1,8]{AG});
\item[(b)]
(\emph{polarity}): $\prec_F$ is opposite to 
$\prec_W$ on $\Sscr$; namely: for $x,y\in\Sscr$, if  $x_f\preceq_f y_f$ for all $f\in F$, then $y_w\preceq_w x_w$ for all $w\in W$, and vice versa (cf.~\cite[Cor.~2]{AG});
\item[(c)]
(\emph{unisizeness}): for each vertex $v\in V$, the size $|x_v|$ is
the same for all stable g-allocations $x\in \Sscr$ (cf.~\cite[Th.~6]{AG}).
 \end{itemize}
   \end{numitem1}

We denote the minimal and maximal elements in the lattice $(\Sscr,\prec_F)$ by
$\xmin$ and $\xmax$, respectively; then the former is the best and the latter
is the worst for the part $W$, in view of the polarity~(2.3)(b).

In what follows, 
we may write $\onebf^e_v$ for the unit base vector of $e$ in $\Rset^{E_v}$ (taking value 1 on $e$, and 0 otherwise). The term $v$ may be omitted here if it is clear from the context.

 
 \section{Splitting} \label{sec:split}
 
Let $G=(V=W\sqcup F,E),\,b,\,C$ be as above (where $\sqcup$ means the disjoint union).
\medskip

 \noindent\textbf{Definition 2.}
Let $\theta: V\to \Zset_+$  satisfy $\theta(v)\le b(E_v)$ for each $v\in V$. By a $\theta$-\emph{splitting} of $G$ we mean a bipartite multigraph $\Gscr=(\Vscr=\Wscr\sqcup\Fscr,\Escr)$ along with a map (\emph{graph homomorphism}) $h: \Gscr\to G$ such that:
  \begin{numitem1} \label{eq:split}
  \begin{itemize}
\item[(a)] $h(\Wscr)=W$, $h(\Fscr)=F$, and $h(\Escr)= E$;
\item[(b)] for each $v\in V$, the set $\Vscr_v:=h^{-1}(v)$ has size $\max\{\theta (v),1\}$;
\item[(c)] for each edge $e=uv\in E$, the set $\Escr(e):=h^{-1}(e)$ consists of $b(e)$ edges, each connecting a vertex in $\Vscr_u$ with a vertex in $\Vscr_v$, and these edges are endowed with a linear order $\trleft_e$ (where for $\eps,\eps'\in\Escr(e)$ with $\eps\trleft_e \eps'$, we say that $\eps$ is \emph{smaller}, or \emph{appears earlier} than $\eps'$);
\item[(d)] for each vertex $\nu\in\Vscr$, the set $\Escr_\nu$ of edges of $\Gscr$ incident to $\nu$ is endowed with a linear order $>_\nu$ establishing preferences on $\Escr_\nu$ (where for $\eps,\eps'\in\Escr_\nu$ with $\eps>_\nu \eps'$, we say that $\nu$ \emph{prefers} $\eps$  to $\eps'$).
 \end{itemize}
  \end{numitem1}
  
  \noindent\textbf{Definition 3.}
Let $x\in \Zset^E_+$ satisfy $x(e)\le b(e)$ for all $e\in E$. We associate to $x$ the set $\Mscr_x=\Mscr_{x,h}\subseteq\Escr$ such that for each $e\in E$, $\Mscr_x$ consists of $x(e)$ smaller edges of $\Escr(e)$ (w.r.t. the order $\trleft_e$).

 %
 \begin{theorem} \label{tm:split}
For each $v\in V$, define $\theta(v):=|x_v|$, where $x$ is a stable g-allocation for $(G,b,C)$ as above ($\theta$ is well-defined by~\refeq{AG}(c)). Then there exists a $\theta$-splitting $(\Gscr=(\Vscr,\Escr),h,\trleft,>)$ that possesses the following property: for each stable $x\in\Sscr_{G,b,C}$, the set $\Mscr_x$ forms a matching in $\Gscr$ (i.e. no two edges of $\Mscr_x$ have a common endvertex) and, moreover, $\Mscr_x$ is stable for $\Gscr$ under the preferences $>_\nu$, $\nu\in \Vscr$. Furthermore, under the correspondence $x\mapsto \Mscr_x$, the lattice of stable g-allocations for $(G,b,C)$ is mapped into a sublattice of the lattice of stable matchings for $(\Gscr,>)$.
 \end{theorem}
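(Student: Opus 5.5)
The plan is to build the splitting from a \emph{full route} $\Tscr=(x_0=\xmin,x_1,\ldots,x_N=\xmax)$ of the kind reviewed in Sect.~\SEC{review}. For each $v\in V$ I will introduce $\max\{\theta(v),1\}$ ``slots'' (the copies $\Vscr_v$). By unisizeness~\refeq{AG}(c), every $x_i$ has $|(x_i)_v|=\theta(v)$. Whenever a rotation $R_i$ is applied with weight $\tau_i$, each vertex $v$ on $R_i$ has one incident edge $e$ of $R_i$ increased by one and another incident edge $e'$ decreased by one, per unit of weight. I process the route unit step by unit step and use $\trleft_e$ as a stack discipline: adding to $e$ activates the next unused element of $\Escr(e)$ in $\trleft_e$-order, and removing from $e'$ deactivates the $\trleft_{e'}$-largest active element of $\Escr(e')$. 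The freed slot of $v$ is then assigned to the newly activated element at $v$. Doing this simultaneously at both ends of each edge of $R_i$ attaches each element $\eps\in\Escr(e)$ to one copy of $w$ and one copy of $f$, and fixes its endpoints in $\Gscr$. Elements of $\Escr(e)$ that are never activated are attached arbitrarily. By construction, at every moment the active set equals $\Mscr_{x_i}$ and occupies distinct slots, so $\Mscr_{x_i}$ is a matching.

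The first obstacle is consistency: a given element $\eps\in\Escr(e)$ must receive the same endpoints every time it becomes active. Suppose an element is deactivated and later re-activated; I would then need it to re-enter the same slot pair. I expect to get this from the structure of full routes in~\cite{karz}: along a route the sequence $x_i(e)$ should be unimodal (first nondecreasing, then nonincreasing), so each element is active during a single time interval. I would try to prove unimodality from the polarity~\refeq{AG}(b) together with substitutability~(A1) at the firm end. If that fails, I would refine the slot assignment using the poset $(\Pi,\lessdot)$ instead of a single route.

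Next, I define the preferences via activation intervals. Each activated $\eps$ has a time interval $I_\eps=[s_\eps,t_\eps)$, and this interval is the same at both endpoints of $\eps$. At a firm copy $\nu$, ranking $>_\nu$ by start time places later elements higher. At a worker copy, earlier elements are ranked higher. Never-activated elements go at the bottom of both orders. Now take $x_i$ and an element $\eps=\nu\mu\notin\Mscr_{x_i}$, where $\nu$ is a worker copy and $\mu$ a firm copy.
\begin{itemize}
\item If $\eps$ was never activated, both endpoints rank it lowest, so $\eps$ does not block.
\item If $t_\eps\le i$, the current partners of both $\nu$ and $\mu$ arrived later than $\eps$. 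So $\mu$ prefers its current partner, and $\eps$ does not block.
\item If $s_\eps>i$, the current partner of $\nu$ is earlier than $\eps$. So $\nu$ is content, and $\eps$ does not block.
\end{itemize}
Hence every $\Mscr_{x_i}$ is stable. For $\theta(v)=0$ the single copy is never matched, and this causes no harm.

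Finally, the lattice part. Every $x\in\Sscr$ lies on some full route, since it can be reached from $\xmin$ and extended to $\xmax$ by rotations with maximal weights. Moreover, all full routes use the same multiset $\Pi$. So I must show that the slot assignment does not depend on the chosen route. This is the second, and I think main, obstacle. I would argue by commuting adjacent independent rotations in a route: they are edge-disjoint or poset-incomparable, so swapping them does not change which slots are freed and refilled. Given route independence, $x\mapsto\Mscr_x$ is injective, because $x(e)=|\Mscr_x\cap\Escr(e)|$. It is also order preserving from $\prec_F$ to the firm order on matchings, since firm copies only move to later-started partners. For meets and joins, I would show that $\Mscr_{x\vee_F y}$ gives each firm copy the $>_\nu$-better of its partners in $\Mscr_x$ and $\Mscr_y$. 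I would do this by viewing both as prefixes of a common route and comparing activation times, which yields the sublattice statement.
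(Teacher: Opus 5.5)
Your construction is essentially the paper's. You build $\Gscr$ along a full route with a stack discipline on each $\trleft_e$. A newly created copy is least preferred at its worker end and most preferred at its firm end. Stability of the route points then follows by comparing creation times, exactly as in Lemma~\ref{lm:xi-Xi}. The unimodality you hope to derive is already available as \refeq{one_peak}, and your route-independence plan is the paper's neighbouring-routes argument. The genuine gap is in how you reach the remaining stable g-allocations.

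\textbf{The gap: non-principal $x$.} You claim that every $x\in\Sscr$ lies on some full route. This is false. All vectors on a full route are principal. Any $x$ with $0<\phi(x)(R,\tau)<\tau$ for some $(R,\tau)\in\Pi$ is not principal, and such $x$ exist as soon as some $\tau\ge 2$. Your unit-step refinement rescues those $x$ with exactly one partially applied pair. Now suppose $\phi(x)$ has two or more partially applied pairs; their rotations are distinct, edge-disjoint and all applicable to the principal vector just below $x$. Then $x$ lies on no unit-step refinement of any full route, because such a refinement completes one rotation before starting the next. For these $x$ your argument says nothing about stability of $\Mscr_x$. This is why the paper needs the separate interpolation step \refeq{clos-clos}--\refeq{A-all_stable}.

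\textbf{A repair within your framework.} Run the construction along arbitrary maximal chains of $(\Sscr,\prec_F)$, in which unit steps of different rotations may interleave; every $x$ lies on such a chain. You then need two facts.
\begin{itemize}
\item One-peak along these chains. This holds because two pairs of $\Pi$ whose rotations share an edge are $\lessdot$-comparable: incomparable pairs are simultaneously applicable at some principal vector, hence edge-disjoint by \refeq{rot_prop}(iii). So the changes at each edge occur in the same order as on a full route.
\item Invariance under transposing two adjacent unit steps of different rotations. This holds because such steps act on disjoint edges and, since the active set is a matching, on disjoint copies.
\end{itemize}
Your activation-interval argument then gives stability verbatim.

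\textbf{The sublattice claim.} ``Viewing both as prefixes of a common route'' cannot work when $x$ and $y$ are incomparable, since no chain contains both. What does work, once chain-independence is proved, is the following. Two unit steps that can occur in either order are adjacent on some chain, and there they touch disjoint copies. Hence the steps touching a fixed firm copy $\mu$ occur in the same order on every chain. The partner of $\mu$ in $\Mscr_x$ is the copy created by the last of these steps performed before reaching $x$, and this is read off from $\phi(x)$. Since $\phi(x\vee_F y)$ and $\phi(x\wedge_F y)$ are the pointwise maximum and minimum of $\phi(x)$ and $\phi(y)$, $\mu$ receives the later-created (resp.\ earlier-created) of its two partners. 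That is the $>_\mu$-better (resp.\ worse) one, which is exactly the join (resp.\ meet) in the stable-matching lattice. The paper itself only records monotonicity here.

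\textbf{A minor point.} ``Ranked lowest at both ends'' does not prevent blocking when both ends are unmatched. This happens for copies of an edge joining two vertices with $\theta=0$, and it needs a separate argument; the paper appeals to stability of $x$ at this point.
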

 
This theorem will be proved in Sect.~\SEC{proof} relying on some constructions and results from~\cite{karz} reviewed in the next section.


\section{A review on the stable g-allocation model} \label{sec:review}


\subsection{Rotations.} \label{ssec:rotat}
~By an (abstract) \emph{rotation} we mean a cycle $R=(v_0,e_1,v_1,\ldots, e_k,v_k=v_0)$ in $G$ in which the edges $e_1,\ldots,e_k$ are different, i.e. $R$ is edge-simple (but not necessarily simple, as some vertices may be passed several times). We write $V_R$ and $E_R$ for the sets of (different) vertices and edges in $R$, respectively. An edge $e_i$ is called \emph{positive} (\emph{negative}) in $R$ if the vertex $v_{i-1}$ belongs to the part $W$ (resp. $F$). The set of positive (negative) edges of $R$ is denoted by $R^+$ (resp. $R^-$), and we associate to $R$ the incidence $0,\pm 1$ vector $\chi^R\in \Zset^E$ taking value 1 on the posititive edges, $-1$ on the negative edges, and 0 on the other edges of $G$. 

As is shown in~\cite{karz}, for each stable g-allocation $x\in\Sscr$, there exists (and can be efficiently constructed) a set $\Rscr(x)$ of rotations possessing the following nice properties:
  \begin{numitem1} \label{eq:rot_prop} 
    \begin{itemize}
\item[(i)]
for each $R\in\Rscr(x)$, the vector $x':=x+\chi^R$ is stable and it immediately succeeds $x$ in the lattice $(\Sscr,\prec_F)$, in the sense that $x\prec_F x'$ and there is no $y\in\Sscr$ such that $x\prec_F y\prec_F x'$ (see~\cite[Prop.~3.4]{karz});
 \item[(ii)]
conversely, for each stable vector $x'$ that immediately succeeds $x\in\Sscr$, there exists a rotation $R\in\Rscr(x)$ such that $x'=x+\chi^R$ (see~\cite[Prop.~3.5]{karz});
 \item[(iii)] the edge sets of rotations in $\Rscr(x)$ are pairwise disjoint (see~\cite[Sec.~3.1]{karz}).
 \end{itemize}
 \end{numitem1}

A rotation $R\in\Rscr(x)$ is called  \emph{applicable} to $x\in\Sscr$; we also say that the stable vector $x':=x+\chi^R$ is obtained from $x$ by applying the rotation $R$ (with weight 1) and that $R$ is \emph{increasing}. (This term concerns the order $\prec_F$, in view of $x\prec_F x'$.) Applying the \emph{reversed} rotation $\bar R=(v_k,e_k,v_{k-1},\ldots,e_1,v_0)$ to $x'$ returns $x$, namely, $x=x'+\chi^{\bar R}$, and we say that $\bar R$ is \emph{decreasing} (w.r.t. $\prec_F$). There is a unique stable vector admitting no increasing (resp. decreasing) rotation, namely, $\xmax$ (resp. $\xmin$).
 \medskip
 
\noindent\textbf{Remark 1.} 
An increasing rotation $R$ applicable to $x\in\Sscr$ is defined up to shifting cyclically. A priori we cannot exclude the existence of another rotation $R'$ (applicable to some $y\ne x$) having the same positive and negative parts: $R'^+=R^+$ and $R'^-=R^-$. A useful property (see~\cite[Sec.~3.1]{karz}) is that the cycle (increasing rotation) $R$ is determined by $x$ and one edge $e$ in it. More precisely,
  \begin{numitem1} \label{eq:e-ep}
if $e$ is a positive edge of an increasing rotation $R$ connecting vertices $w\in W$ and $f\in F$, then $e$ is interesting for $f$ under $x$ and satisfies $C_f(x_f+\onebf^e_f)=x_f+\onebf^e_f-\onebf^{e'}_f$, where $e'$ is the next (negative) edge in the cycle $R$;
  \end{numitem1}
When $e$ is negative, the (positive) edge $e'$ next to $e$ in $R$ is determined by $e$ as well, but in a somewhat different way than in~\refeq{e-ep}; see~\cite[Exp.~(3.3)]{karz}. 

Note also that, using~\refeq{e-ep}, one can show the following (cf.~\cite[Lemma~3.6]{karz}):
  \begin{numitem1} \label{eq:CfR+}
for an increasing rotation $R$ applicable to a stable $x$ and passing a vertex $f\in F$, there holds $C_f(x_f+\chi_f^{R^+})=x_f+\chi_f^{R^+}-\chi_f^{R^-}$, where $\chi_f^{R^+}$ (resp. $\chi_f^{R^-}$) is the sum of vectors $\onebf_f^e$ over $e\in R^+\cap E_f$ (resp. $e\in R^-\cap E_f$) 
\end{numitem1}

Next, for an increasing rotation $R$ applicable to a stable $x\in\Sscr$, we may try to apply $R$ with a larger weight. More precisely, we say that a weight $\lambda\in\Zset_{>0}$ is \emph{feasible} for $(x,R)$ if $R$ can be applied with weight 1, step by step, $\lambda$ times, which means that the sequence $x=x_0,x_1,\ldots,x_\lambda$ defined by $x_i:=x_{i-1}+\chi^R$, $i=1,\ldots,\lambda$, consists of stable vectors. (In this case $x_{i-1}\prec_F x_i$ is valid for each $i$.) The maximal feasible weight for $(x,R)$ is denoted by $\tau_R(x)$.

Consider the set $\Rscr(x)$ of (increasing) rotations applicable to $x\in\Sscr$. An important fact is that the rotations in $\Rscr(x)$ commute. Moreover, the following property is valid (see~\cite[Cor.~4.1]{karz}).
  \begin{numitem1} \label{eq:commute}
Let $\Rscr'\subseteq\Rscr(x)$ and let $\lambda:\Rscr'\to \Zset_+$ be such
that $\lambda(R)\le \tau_R(x)$ for each $R\in\Rscr'$. Then the vector
$x':=x+\sum(\lambda(R) \chi^R \colon R\in\Rscr')$ is stable, each $R\in \Rscr'$ with $\lambda(R)<\tau_R(x)$ is an increasing rotation applicable to $x'$ having the maximal feasible weight $\tau_R(x')=\tau_R(x)-\lambda(R)$, and each $R'\in \Rscr(x)-\Rscr'$ is applicable to $x'$ keeping the maximal feasible weight: $\tau_{R'}(x')=\tau_{R'}(x)$. Roughly speaking, the rotations in $\Rscr(x)$ can be applied in any order.  
  \end{numitem1}


\subsection{Routes.} \label{ssec:routes}
~Let $\Tscr$ be a sequence $x_0,x_1,\ldots,x_N$ of stable vectors such that each $x_i$ is obtained from $x_{i-1}$ by applying an increasing rotation $R_i$ with a feasible weight $\lambda_i\in\Zset_{>0}$, i.e. $R_i\in\Rscr(x_{i-1})$, $\lambda_i\le \tau_{R_i}(x_{i-1})$ and $x_i=x_{i-1}+\lambda_i\chi^{R_i}$. Then $x_0\prec_F x_1\prec_F\cdots \prec_F x_N$. We call $\Tscr$ a \emph{route} from $x_0$ to $x_N$. From~\refeq{rot_prop} it follows that there exists a route from $\xmin$ to $\xmax$. We liberally say that a rotation $R_i$ with weight $\lambda_i$ \emph{is used} in $\Tscr$.

Note that one and the same rotation may be used in a route many times. Using terminology from~\cite{karz}, a route $\Tscr$ as above  is called: \emph{non-excessive} if $i<j$ and $R_i=R_j$ imply $\lambda_i=\tau_{R_i}(x_{i-1})$;  and \emph{principal} if $\lambda_i=\tau_{R_i}(x_{i-1})$ for all $i=1,\ldots,N$. A principal route from $\xmin$ to $\xmax$ is called \emph{full}. One shows that for any $x,y\in \Sscr$ with $x\prec_F y$, there exists a non-excessive route from $x$ to $y$.

For a non-excessive route $\Tscr$, let $\Pi(\Tscr)$ denote the \emph{family} of pairs $(R_i,\lambda_i)$ (\emph{weighted rotations}) used in $\Tscr$ (where each pair $(R,\lambda)$ occurs in $\Pi$ as many times as it is used in $\Tscr$). The following property is of importance (see~\cite[Prop.~4.2]{karz}):
  \begin{numitem1} \label{eq:invar_rot}
Let $x,y\in \Sscr$ and $x\prec_F y$. Then for all non-excessive routes $\Tscr$ going from $x$ to $y$, the family $\Pi(\Tscr)$ is the same. A similar property is valid relative to principal routes as well.
 \end{numitem1}
\noindent(Initially an invariance property of this sort (in case $(x,y)=(\xmin,\xmax)$) was revealed by Irving and Leather~\cite{IL} for usual rotations in the classical stable marriage problem and subsequently it was shown for more general models of stability.)

Let us call a stable vector $x\in \Sscr$ \emph{principal} if there is a principal route from $\xmin$ to $x$. We denote the set of principle vectors by $\Sscrpr$, and the restriction of $\prec_F$ to $\Sscrpr$ by $\precpr$. (Then $(\Sscrpr,\precpr)$ forms a distributive sublattice of $(\Sscr,\prec_F)$.)


\subsection{Poset of rotations and closed functions.} \label{ssec:poset_rot}
~By~\refeq{invar_rot}, the family $\Pi(\Tscr)$ of pairs (weighted rotations) $(R,\tau)$ that are used (respecting possible replications) in a full route $\Tscr$ does not depend on the full route; we abbreviate it as $\Pi$. We write $\Rscr$ for the set of \emph{different} rotations used in a full route; equivalently, $\Rscr$ is the set of  different cycles in $G$ forming increasing rotations applicable to stable vectors. For $R\in\Rscr$, the subfamily of pairs in $\Pi$ involving this $R$ is denoted by $\Pi_R$. We also denote by $\hat\Rscr$ the family of all rotations, with possible replications, occurring in $\Pi$ (then for $R\in\Rscr$, there are as many occurrences of $R$ in $\hat\Rscr$ as the cardinality of $\Pi_R$).

A crucial fact is that one can arrange a poset on $\Pi$ that generates a representation for the principal lattice $(\Sscrpr,\precpr)$ and, further, for the whole lattice $(\Sscr,\prec_F)$. (The latter extends a classical result by Irving et al.~\cite{ILG} on a poset representation for stable marriages.) More precisely (see~\cite[Th.~5.9, Cor.~5.10, Exp.~(5.6)]{karz}),
  \begin{numitem1} \label{eq:princ_biject}
there exist a partial order $\lessdot$ on $\Pi$ and a map $\phipr$ from the principal vectors to subsets of $\Pi$ such that:
 \begin{itemize}
 \item[(a)] for each $x\in\Sscrpr$, $\phipr(x)$ is the family of weighted rotations $(R,\tau)$ used in a principal route from $\xmin$ to $x$ (i.e.  $x=\xmin+\sum(\tau\chi^R \colon (R,\tau)\in \phipr(x))$);
 \item[(b)] $\phipr$ establishes an isomorphism between the principal lattice $(\Sscrpr,\precpr)$ and the lattice of closed subfamilies in $(\Pi,\lessdot)$, where $\Cscr\subseteq \Pi$ is called \emph{closed} if $(R,\tau),(R',\tau')\in \Pi$, $(R,\tau)\lessdot (R',\tau')$  and $(R',\tau')\in \Cscr$ imply $(R,\tau)\in\Cscr$, and where the closed subfamilies are regarded as partially ordered by inclusion;
\item[(c)] for each rotation $R\in\Rscr$, the restriction of $\lessdot$ to $\Pi_R$ is a linear order.
 \end{itemize}
 \end{numitem1}
 
\noindent In particular, under the bijection $\phipr$ between the principal stable vectors and the closed families in $(\Pi,\lessdot)$, $\xmin$ corresponds to the empty set, and $\xmax$ to the whole $\Pi$. 

Equivalently, the partial order $\lessdot$ can be defined as follows (in a spirit of~\cite{ILG}):
 \begin{numitem1} \label{eq:lessdot}
pairs $(R,\tau),(R',\tau')\in\Pi$ satisfy $(R,\tau)\lessdot (R',\tau')$ if and only if for \emph{any} full route $\Tscr$, the pair $(R,\tau)$ is used in $\Tscr$ \emph{earlier} than $(R',\tau')$ (see~\cite[Secs.~5.1,\,5.2]{karz}).
  \end{numitem1}

Next we extend $\phipr$ to a map from all stable vectors. Let us call a function $\lambda:\Pi\to\Zset_+$ \emph{closed} if it does not exceed $\tau$ (i.e. $\lambda(R,\tau)\le \tau$ for all $(R,\tau)\in\Pi$) and the relations $(R,\tau)\lessdot (R',\tau')$ and $\lambda(R',\tau')>0$ imply $\lambda(R,\tau)=\tau$. 

One can see that for any closed function $\lambda$, the family $\{(R,\tau)\in\Pi\colon \lambda(R,\tau)>0\}$ is closed, and ``conversely'': taking a closed family $\Cscr\subseteq\Pi$, one can form a closed function $\lambda$ by defining $\lambda(R,\tau)$ to be an arbitrary integer between 0 and $\tau$ for each \emph{maximal} pair $(R,\tau)$ in $\Cscr$, the value $\tau$ for the other pairs $(R,\tau)$ in $\Cscr$, and 0 for the rest. As to stable vectors, for any $x\in\Sscr$, taking a non-excessive route $\Tscr$ from $\xmin$ to $x$ and relying on~\refeq{commute}, one can transform $\Tscr$ into a principal route in a natural way, by assigning the maximal feasible weight of the current rotation at each step, thus associating to $x$ the resulting principal vector $x'$. Based on these observations, the following facts can be obtained (see~\cite[Th.~5.11]{karz}):
  \begin{numitem1} \label{eq:gen_biject}
there exists a map $\phi$ from the stable vectors to functions on $\Pi$ such that:
  \begin{itemize}
\item[(a)] for each $x\in\Sscr$, $\phi(x)$ is generated by a non-excessive route $\Tscr$ from $\xmin$ to $x$; namely, for each pair $(R,\tau)\in\Pi$, $\phi(x)$ takes value $\lambda$ if $R$ (as an element of $\hat\Rscr$) is used with weight $\lambda$ in $\Tscr$, and 0 otherwise;
 \item[(b)] $\phi$ establishes an isomorphism between the lattice $(\Sscr,\prec_F)$ and the lattice of closed functions for $(\Pi,\lessdot)$ (using the natural comparison $\le$ on functions);
 \item[(c)] the restriction of $\phi$ to $\Sscrpr$ corresponds to $\phipr$. 
 \end{itemize}
  \end{numitem1}  
  
Basic tasks used in our constructions concern finding rotations and their maximal weights. As is shown in~\cite[Sec.~6]{karz},
  \begin{numitem1} \label{eq:2tasks}
given a stable vector $x\in\Sscr$, 
  \begin{itemize}
  \item[(a)] the set $\Rscr(x)$ of rotations applicable to $x$ can be constructed in strongly polynomial time; in particular, it takes $O(|E|^2)$ oracle calls;
  \item[(b)] for each $R\in\Rscr(x)$, the maximal feasible weight $\tau_R(x)$ can be found in pseudo polynomial time, viewed as $O(\bmax|E|^2)$.
    \end{itemize}
    \end{numitem1}
    
An important parameter in estimates in~\cite{karz} is the length $N$ of a full route $\Tscr=(x_0,x_1,\ldots,x_N)$, which is equal to the number $|\Pi|$ of elements in the poset $(\Pi,\lessdot)$, or the size $|\hat\Rscr|$. One shows that
  \begin{numitem1} \label{eq:N}
  \begin{itemize}
 \item[(a)] the poset $(\Pi,\lessdot)$ can be constructed in pseudo polynomial time; in particular, the number of oracle calls is $O(|E|^2(\bmax N+N^2))$ (see~\cite[Th.~6.1]{karz});
\item[(b)] $N\le \bmax |E|/2$. 
   \end{itemize} 
   \end{numitem1}
Here~(b) is a consequence from the following useful property (shown in the proof of Lemma~6.2 in~\cite{karz}) :
   \begin{numitem1} \label{eq:one_peak}
for $e\in E$ and a full route $\Tscr=(x_0,x_1,\ldots,x_N)$, the values $x_0(e),x_1(e),\ldots,x_N(e)$ behave in a ``one-peak'' manner; namely, there is $i$ such that the values from $x_0(e)$ to $x_i(e)$ are weakly increasing (admitting equalities), and the values from $x_i(e)$ to $x_N(e)$ are weakly decreasing.
  \end{numitem1}

 
 \section{Proof of Theorem~\ref{tm:split}} \label{sec:proof}
 
Let $\theta$ be as in the hypotheses of this theorem. In our proof we will use a weakened version of the notion of $\theta$-splitting, called a \emph{partial $\theta$-splitting}. The difference from the one in Sect.~\SEC{split} is as follows. First, in Definition~2, we replace $\Escr$ by a (possibly smaller) set $\Ascr=\cup(\Ascr(e)\colon e\in E)$ assuming that $h(\Ascr)\subseteq E$; here (cf.~\refeq{split}(c)) for each $e=uv\in E$, the set $\Ascr(e)=h^{-1}(e)$ consists of \emph{at most} $b(e)$ edges, each connecting a vertex in $h^{-1}(u)$ with a vertex in $h^{-1}(v)$, and accordingly, the elements of $\Ascr(e)$ are linearly ordered by $\trleft_e$. Second, in ~\refeq{split}(d), for a vertex $\nu\in\Vscr$, a linear order $>_v$ is assigned on the set $\Ascr_\nu$ of edges in $\Ascr$ incident to $\nu$ (rather than $\Escr_\nu$). Third, in Definition~3, one should consider $x\in\Zset_+^E$ such that $x(e)\le|\Ascr(e)|$ for all $e\in E$, and we associate to $x$ the corresponding subset $\Mscr_x$ of $\Ascr$.

To prove the theorem, we take an arbitrary full route $\Tscr=(\xmin=x^0,x^1,\ldots,x^N=\xmax)$ where each $x^i$ ($0<i\le n$) is obtained from $x^{i-1}$ by applying a rotation $R_i$ with the maximal feasible weight $\tau_{R_i}$. For $i=0,\ldots,N$, define $y^i\in \Zset_+^E$ by
  \begin{equation} \label{eq:yi}
  y^i(e):=\max\{x^0(e),\ldots,x^i(e)\}\quad\mbox{for each}\quad e\in E.
  \end{equation}
Then $y^0\le y^1\le \cdots\le y^N$ (we shall see later that all inequalities here are strong). 

Using $N+1$ iterations, we construct partial $\theta$-splittings $\Gscr^i=(\Vscr,\Ascr^i)$, $i=0,\ldots,N$, along with graph homomorphisms $h^i:\Gscr^i\to G$ (where $h^0(\Vscr)=\cdots=h^N(\Vscr)$) and linear orders $\trleft^i_e$ ($e\in E$) and $>^i_\nu$ ($\nu\in\Vscr$) satisfying 
 \begin{equation} \label{eq:y-A}
  |\Ascr^i(e)|=y^i(e),\;\;i=0,\ldots,N,\; e\in E,\quad\mbox{and}\quad \Ascr^0\subseteq \cdots\subseteq \Ascr^N.
  \end{equation}
  
For $i$ and $e$ as above, we define $\Xscr^i(e)$ to be the set (sublist) of $x^i(e)$ \emph{smaller} elements of $\Ascr^i(e)$ (w.r.t. the order $\trleft^i_e$).
\medskip
  
Initially, at \emph{iteration 0}, for each $e=uv\in E$, we assign as $\Ascr^0(e)$ (equal to $\Xscr^0(e)$) an arbitrary collection (list) of $x^0(e)$ pairwise disjoint edges connecting $h^{-1}(u)$ and $h^{-1}(v)$ and assign a linear order $\trleft^0_e$ on them, in such a way that the whole set $\Ascr^0=\cup(\Ascr^0(e))$ forms a matching, denoted as $\Mscr^0$ (that is just $\Mscr_{x^0}$), which is, obliously, possible. 

Then for each $v\in V$, the set $\Vscr_v=h^{-1}(v)$ consists of $\theta(v)$ elements, and for each $\nu\in\Vscr_v$, the set $\Ascr^0_\nu$ consists of a single element (so the order $>^0_\nu$ on $\Ascr^0_\nu$ is trivial).
\medskip

Next we describe \emph{$i$-th iteration} for $i=1,\ldots,N$. It transforms $\Ascr^{i-1}$ into $\Ascr^i$ by using the rotation $R_i$ with the weight $\tau:=\tau_{R_i}$. Let $a_1,c_1,a_2,c_2,\ldots,a_k,c_k$ be the sequence of edges of $R_i$ (up to shifting cyclically), where for $j=1,\ldots,k$, the edge $a_j$ ($c_j$) belongs to $R^+_i$ (resp. $R^-_i$) and connects vertices $w_j\in W$ and $f_j\in F$ (resp. $f_j$ and $w_{j+1}$), letting $w_{k+1}:=w_1$. We first explain how to transform each set $\Xscr^{i-1}(c_j)$ into $\Xscr^i(c_j)$, and then how to transform each $\Xscr^{i-1}(a_j)$ into $\Xscr^i(a_j)$.

For $j=1,\ldots,k$, the negativity of $c_j$ in $R_i$ implies that $|\Xscr^{i-1}(c_j)|=x^{i-1}(c_j)\ge \tau$. Therefore, we can extract from $\Xscr^{i-1}(c_j)$ the set (sublist) $\Xi_j$ of $
\tau$ \emph{larger} edges (w.r.t. the order $\trleft^{i-1}_{c_j}$), say, $\xi^1_j=w^1_{j+1}f^1_j,\ldots,\xi^\tau_j=w^\tau_{j+1}f^\tau_j$. Here $w^1_{j+1},\ldots,w^\tau_{j+1}$ are different vertices in $\Wscr$ (copies of $w_{j+1}$), and  $f^1_j\ldots,f^\tau_j$ are different vertices in $\Fscr$ (copies of $f_j$). 
Based on the relations
  \begin{equation} \label{eq:Xcj}
  |\Xscr^{i-1}(c_j)-\Xi_j|=x^{i-1}(c_j)-\tau=x^i(c_j),
  \end{equation}
we assign the set (list) $\Xscr^i(c_j)$ by removing $\Xi_j$ from $\Xscr^{i-1}(c_j)$. At the same time, the $\Ascr$-set and order $\trleft$ are preserved for $c_j$; namely, $\Ascr^i(c_j):=\Ascr^{i-1}(c_j)$ and $\trleft^i_{c_j}=\trleft^{i-1}_{c_j}$.

In turn, each positive edge $a_j=w_{j}f_j$ of $R_i$ ($j=1,\ldots,k$) satisfies
  \begin{equation} \label{eq:bxaj}
  b(a_j)\ge x^{i-1}(a_j)+\tau=x^i(a_j).
  \end{equation}
Relying on~\refeq{Xcj}--\refeq{bxaj}, we form a set $\Psi_j$ of new pairwise disjoint edges $\psi^1_j,\ldots,\psi^\tau_j$ (copies of $a_j$) connecting the $\Wscr$-vertices of $\Xi_{j-1}$ to the $\Fscr$-vertices of $\Xi_{j}$, respecting the orderings.  Namely, for $p=1,\ldots,\tau$, edge $\xi^p_j$  should connect $w^p_j$ and $f^p_{j}$. Then we assign $\Xscr^i(a_j)$ by adding $\Psi_j$ to the current $\Xscr^{i-1}(a_j)$, and accordingly assign $\Ascr^i(a_j)$ by adding $\Psi_j$ to $\Ascr^{i-1}(a_j)$, forming the new order $\trleft^i_{a_j}$ by adding the elements of $\Psi_j$ (in their order) to the end of $\trleft^{i-1}_{a_j}$.

The iteration finishes with updating the linear orders $>_\nu$ on the obtained sets $\Ascr^i_\nu$ for $\nu\in\Vscr$. Here for each new edge of the form $\psi^p_j=w^p_j f^p_{j}$ (where, as above, $j=1,\ldots,k$ and $p=1,\ldots,\tau$), we use the following rule:
  \begin{numitem1} \label{eq:lin_order}
$\psi^p_j$ is made least preferred for $\eta=w^p_j$ (in $\Ascr^i_{\eta}=\Ascr^{i-1}_{\eta}\cup\{\psi^p_j\}$) and most preferred for $\nu=f^p_{j+1}$ (in $\Ascr^i_{\nu}=\Ascr^{i-1}_{\nu}\cup\{\psi_j^p\}$).
  \end{numitem1}
  
Next we examine the collections $\Xscr^0,\ldots,\Xscr^N$ and $\Ascr^0,\ldots,\Ascr^N$ constructed in the above procedure. Considering $i$-th iteration for $i\ge 1$ and the sequence $a_1,c_1,\ldots,a_k,c_k$ of alternating positive and negative edges in the rotation $R_i$ and using notation as above, we define $\Xi^i$ and $\Psi^i$ to be the unions of sets $\Xi_j$ and $\Psi_j$ over $j=1,\ldots,k$, respectively. By the above construction, we have
   \begin{equation} \label{eq:XiAi}
   \Xscr^i=(\Xscr^{i-1}-\Xi^i)\sqcup \Psi^i\quad \mbox{and}\quad 
   \Ascr^i=\Ascr^{i-1}\sqcup \Psi^i=\Xscr^0\cup \cdots\cup\Xscr^i.
   \end{equation}
   
In particular, all inclusions in~\refeq{y-A} are strong. Also one can see from the construction that each vertex in $\Vscr$ is covered by edges in $\Xi^i$ as many times as by edges in $\Psi^i$. Using this and the fact that the initial set $\Xscr^0=\Ascr^0$ is a matching, we can conclude by induction on $i$ that
  \begin{numitem1} \label{eq:Xi-matching}
  all sets $\Xscr^0,\ldots, \Xscr^N$ are matchings covering the same set of vertices, namely, the set $\cup(h^{-1}(v)\colon v\in V,\; \theta(v)>0)$.
  \end{numitem1}
  
Let $\Ascr:=\Ascr^N$. By~\refeq{XiAi}, $\Xscr^i\subseteq \Ascr$ for $i=0,\ldots,N$. Also the construction guarantees that all edges in $\Ascr$ are different (a priori admitting multiple edges in it).

In order to obtain the required correspondence stated in Theorem~\ref{tm:split}, we first establish a similar interrelation concerning the stable g-allocations occurring in the above full route $\Tscr$ and the set $\Ascr$ (with linear orders on vertices and edges as above). We associate to each $x^i\in\Tscr$ the matching $\Xscr^i$, denoting the correspondence $x^i\mapsto\Xscr^i$ by $\gamma=\gamma_\Tscr$. Then $x^i(e)=|\Xscr^i(e)|$ for each $e\in E$ (cf.~\refeq{Xcj},\refeq{bxaj}).
 \begin{lemma} \label{lm:xi-Xi}
Let $x^i\in\Tscr$, Then:
  \begin{itemize}
\item[\rm(a)] for each $e\in E$, the set $\Xscr^i(e)$ consists of $x^i(e)$ first elements of $\Ascr(e)$ (w.r.t. $\trleft_e$);
 \item[\rm(b)] the matching $\Xscr^i$ is stable in $(\Vscr,\Ascr)$ w.r.t. the linear preferences $>_\nu$ on the sets $\Ascr_\nu$, $\nu\in\Vscr$.
   \end{itemize}
   \end{lemma}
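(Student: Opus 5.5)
The plan is to prove both parts by induction on the iteration number $i$, reading off invariants that the construction maintains. For (b) I will rely on the fact that the linear orders are only ever extended: each $>^{i}_\nu$ is obtained from $>^{i-1}_\nu$ by inserting a new element at the bottom (for a $\Wscr$-vertex) or at the top (for an $\Fscr$-vertex). Similarly, each $\trleft^i_e$ is obtained from $\trleft^{i-1}_e$ by appending at the end. So the final orders $>_\nu$ and $\trleft_e$ restrict to $>^i_\nu$ on $\Ascr^i_\nu$ and to $\trleft^i_e$ on $\Ascr^i(e)$.

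For (a), I first show that $\Xscr^i(e)$ is an initial segment of $\Ascr^i(e)$ w.r.t. $\trleft^i_e$. Since $\Ascr(e)$ extends $\Ascr^i(e)$ by appending later edges at the end, the claim then transfers to $\Ascr(e)$. The invariant is preserved trivially when $e$ is not in $R_i$. It is also preserved when $e$ is negative in $R_i$: we delete the $\tau$ \emph{largest} elements of $\Xscr^{i-1}(e)$, and an initial segment minus its tail is still an initial segment.

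The delicate case, and the main obstacle, is $e$ positive in $R_i$. There the new edges $\Psi_j$ are appended at the end of $\Ascr^{i-1}(e)$, so I need $\Xscr^{i-1}(e)=\Ascr^{i-1}(e)$, i.e. $x^{i-1}(e)=y^{i-1}(e)$. This is where the one-peak property~\refeq{one_peak} enters. Suppose $x^{i-1}(e)<y^{i-1}(e)=\max_{s<i}x^s(e)$. Then the sequence $x^0(e),\ldots,x^{i-1}(e)$ has already strictly decreased from its maximum. Yet $x^i(e)=x^{i-1}(e)+\tau>x^{i-1}(e)$, a strict increase after a strict decrease, which contradicts~\refeq{one_peak}. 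Hence $\Xscr^{i-1}(e)=\Ascr^{i-1}(e)$, and after appending $\Psi_j$ we still have $\Xscr^i(e)=\Ascr^i(e)$, an initial segment. This also shows that all inclusions in~\refeq{y-A} hold and that $|\Ascr^i(e)|=y^i(e)$.

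For (b), I will establish by induction on $i$ the following key invariant. For every $\eta\in\Wscr$ covered by $\Xscr^i$, its $\Xscr^i$-edge is the \emph{least} preferred element of $\Ascr^i_\eta$. For every $\nu\in\Fscr$ covered by $\Xscr^i$, its $\Xscr^i$-edge is the \emph{most} preferred element of $\Ascr^i_\nu$.
\begin{itemize}
\item At $i=0$ this is trivial, since each $\Ascr^0_\nu$ is a singleton.
\item At step $i$, a vertex whose matched edge changes receives a new edge $\psi^p_j$. By rule~\refeq{lin_order}, this edge is placed at the bottom for its $\Wscr$-end and at the top for its $\Fscr$-end.
\item A vertex untouched by $\Xi^i\cup\Psi^i$ keeps its matched edge and gets no new incident edge.
\end{itemize}
Moreover, every edge of $\Ascr-\Ascr^i$ at a $\Wscr$-vertex $\eta$ is added later, hence sits below the $\Xscr^i$-edge of $\eta$ in $>_\eta$.

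Now let $\eps=\eta\nu\in\Ascr-\Xscr^i$ with $\eta\in\Wscr$ and $\nu\in\Fscr$. Both ends are covered by $\Xscr^i$: by~\refeq{Xi-matching}, vertices with $\theta=0$ carry no edges of $\Ascr$. If $\eps\in\Ascr^i$, then $\nu$ prefers its $\Xscr^i$-edge to $\eps$. If $\eps\notin\Ascr^i$, then $\eta$ prefers its $\Xscr^i$-edge to $\eps$. In either case $\eps$ is not blocking, so $\Xscr^i$ is stable in $(\Vscr,\Ascr)$.
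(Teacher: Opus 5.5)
Your proof is correct and follows essentially the same route as the paper. Part (a) rests on the one-peak property \refeq{one_peak}: you phrase it as an induction showing that a positive step can only occur when $x^{i-1}(e)$ equals the running maximum, while the paper locates the peak index $r$ with $\Xscr^r(e)=\Ascr(e)$. Part (b) rests on the top/bottom insertion rule \refeq{lin_order}, with a case split on whether a candidate blocking edge was created by iteration $i$ (then the $\Fscr$-end rejects it) or after it (then the $\Wscr$-end rejects it).
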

   
\begin{proof}
To show (a) for $e\in E$, we use the ``one-peak'' property~\refeq{one_peak}, which implies the existence of $r\in\{0,\ldots,N\}$ such that $x^0(e)\le x^1(e)\le \cdots\le x^r(e)\ge x^{r+1}(e)\ge \cdots\ge x^N(e)$. By the construction, for $i=1,\ldots,r$, we have $\Xscr^{i-1}(e)\subseteq \Xscr^i(e)$ (for in case $x^{i-1}(e)<x^i(e)$, $e$ is positive in the rotation $R_i$, and $\Xscr^i(e)$ is obtained by adding $\tau_{R_i}$ new elements to the end of $\Xscr^{i-1}(e)$). And for $i=r+1,\ldots,N$, we have $\Xscr^{i-1}(e)\supseteq \Xscr^i(e)$ (for in case $x^{i-1}(e)>x^i(e)$, $e$ is negative in $R_i$, and $\Xscr^i(e)$ is obtained by deleting $\tau_{R_i}$ last elements from $\Xscr^{i-1}(e)$). This implies $\Xscr^r(e)=\Ascr(e)$, and~(a) follows.

To show~(b), consider a vertex $\nu$ in the ``firm'' part $\Fscr$ of $\Vscr$ and assume that $\Ascr_\nu$ is nonempty. Let $i(1)<i(2)<\cdots<i(p)$ be the sequence of indexes $i\in\{1,\ldots,N\}$ such that $\Xscr^{i-1}_\nu \ne \Xscr^{i}_\nu$. By the construction, for each $q=1,\ldots,p$, the difference between $\Xscr^{i(q)-1}_\nu$ and $\Xscr^{i(q)}_\nu$ must consist of two edges, of which one, $\xi(q)$ say, belongs to the set $\Xi^{i(q)}$ ($=\Xscr^{i(q)-1}-\Xscr^{i(q)}$), while the other, $\psi(q)$ say, belongs to $\Psi^{i(q)}$ ($=\Xscr^{i(q)}-\Xscr^{i(q)-1}$) (here the former (latter) is created from a negative (resp. positive) edge of the rotation $R_{i(q)}$; these edges are consecutive in $R_{i(q)}$ and pass the vertex $h(\nu)$ of $G$).

At $i(q)$-th iteration, the edge $\xi(q)$ in the current matching $\Xscr$ is replaced by $\psi(q)$. Moreover, by rule~\refeq{lin_order}, $\psi(q)$ becomes most preferred for $\nu$ in the current $\Ascr^{i(q)}_\nu=\Xscr^0_\nu \cup\cdots\cup \Xscr^{i(q)}_\nu$. Note also that the first ``negative'' element $\xi(1)$ is exactly the unique edge incident to $\nu$ in the initial matching $\Xscr^0$; denote it as $\psi(0)$.

As a result, we obtain $\psi(0)=\xi(1)<_\nu\psi(1)=\xi(2)<_\nu\cdots <_\nu\psi(p-1)=\xi(p)<_\nu \psi(p)$. It follows that for $i=1,\ldots,N$, the unique element of $\Ascr_\nu$ occurring in the matching $\Xscr^{i}$ is more preferred by $>_\nu$ than any other element of $\Ascr_\nu$ created at $i'$-th iteration with $i'<i$.

Now, arguing in a similar fashion, consider a vertex $\eta$ in the ``worker'' part $\Wscr$ with $\Ascr_\eta\ne\emptyset$. Let $\ell(1)<\ell(2)<\cdots<\ell(r)$ be the sequence of indexes $\ell\in\{1,\ldots,N\}$ such that $\Xscr^{\ell-1}_\eta \ne \Xscr^{\ell}_\eta$. Then for each $q=1,\ldots,r$, the difference between $\Xscr^{\ell(q)-1}_\eta$ and $\Xscr^{\ell(q)}_\eta$ consists of two edges, say, an edge $\xi'(q)$ in $\Xi^{\ell(q)}$ and an edge $\psi'(q)$ in $\Psi^{\ell(q)}$.

At $\ell(q)$-th iteration, the edge $\xi'(q)$ in the current matching $\Xscr$ is replaced by $\psi'(q)$. By rule~\refeq{lin_order}, $\psi'(q)$ becomes least preferred for $\eta$ in the current $\Ascr^{\ell(q)}_\eta$. Also the element $\xi'(1)$ coincides with the unique edge incident to $\eta$ in $\Xscr^0$, denoted as $\psi'(0)$.

We obtain $\psi'(0)=\xi'(1)>_\eta\psi'(1)=\xi'(2)>_\eta\cdots >_\eta\psi'(r-1)=\xi'(r)<_\eta \psi'(r)$. It follows that for $i=0,\ldots,N$, the unique element of $\Ascr_\eta$ occurring in the matching $\Xscr^{i}$ is more preferred than any other element of $\Ascr_\eta$ created at $i'$-th iteration with $i'>i$.

Using above observations, we finish the proof as follows. Suppose that some matching $\Xscr^i$ is unstable in $(\Vscr,\Ascr)$. Then there is an edge $\eps=\eta\nu\in \Ascr$ (with $\eta\in\Wscr$ and $\nu\in\Fscr$) blocking for $\Xscr^i$. Let $\eps$ be created at $i'$-the iteration, and let $\alpha$ and $\beta$ be the edges in $\Xscr^i$ incident to $\nu$ and $\eta$, respectively. Obviously, $\eps\notin \Xscr^i$ and $i'\ne i$.  But by reasonings above, if $i'<i$ then $\alpha>_\nu \eps$, while if $i'>i$ then $\beta>_\eta \eps$, contrary to the supposition that $\eps$ is blocking.
\end{proof}   
   
Next we explain how to obtain the required property for the remaining principal g-allocations. This is based on the observation that the obtained structure $\Ascr$, also denoted as $\Ascr[\Tscr]$, does not depend on the choice of full route $\Tscr$. 

More precisely, let us say that two full routes $\Tscr=(x^0,x^1,\ldots,x^N)$ and $\Tscr'=(z^0,z^1,\ldots,z^N)$ (with $x^0=z^0=\xmin$ and $x^N=z^N=\xmax$) are \emph{neighboring} if $x^i\ne z^i$ holds for exactly one $i\in\{0,\ldots,N\}$. Note that in this case there are two rotations $R$ and $R'$ applicable to $y:=x^{i-1}=z^{i-1}$ such that $x^i=y+\tau_R\chi^R$ and $z^i=y+\tau_{R'}\chi^{R'}$. (Here $\tau_R$ and $\tau_{R'}$ are the maximal feasible weights of $R$ and $R'$ at $y$, respectively. These $R$ and $R'$ commute at $y$ (cf.~\refeq{commute}), i.e. $x^{i+1}=z^{i+1}$ is obtained by applying $(R',\tau_{R'})$ to $x^i$, and $(R,\tau_R)$ to $z^i$.)

An important fact (cf.~\refeq{rot_prop}(iii)) is that the rotations $R$ and $R'$ are edge disjoint (though may intersect at vertices). We assert that for any two edges $e\in E_R$ and $e'\in E_{R'}$, their images (copies) in $\Xscr^i$ and $\Zscr^i$ are already vertex disjoint (where $\Xscr^i$ and $\Zscr'$ are the images of $x^i$ and $z^i$ in $\Ascr[\Tscr]$ and $\Ascr[\Tscr']$, respectively). Indeed, if $e\in R^-$ and $e'\in R'^-$, then their copies $\Xi(e)=h^{-1}(e)$ and $\Xi(e')=h^{-1}(e')$ (created at $i$-th iterations for $\Tscr$ and $\Tscr'$) lie in the matching $\Yscr$ ($=\Xscr^{i-1}=\Zscr^{i-1}$); therefore, $\Xi(e)$ and $\Xi(e')$ are vertex disjoint. And if $e\in R^+$, then by the construction, the set $\Psi(e)$ of copies of $e$ created at $i$-th iteration for $\Tscr$ connects only vertices occurring in $\Xi^i$, and similarly if $e'\in R'^+$, whence the assertion follows.

As a consequence, we obtain that the sets $\Ascr[\Tscr]$ and $\Ascr[\Tscr']$, along with the linear orders $\trleft^N$ and $>^N$ coincide. This implies that $\Zscr^i$ is a stable matching in $(\Vscr,\Ascr[\Tscr])$.

Since the lattice $(\Sscrpr,\precpr)$ is distributive, the full routes (viz. maximal chains) in it are connected via the neighboring relations as above, i.e. for any full routes $\Tscr$ and $\Tscr'$, there is a sequence $\Tscr=\Tscr_1,\Tscr_2,\ldots, \Tscr_K=\Tscr'$ of full routes such that any two $\Tscr_i,\Tscr_{i+1}$ are neighboring. Summing up the above reasonings, we obtain the following:
 \begin{numitem1} \label{eq:universalA}
for any principal stable g-allocation $x\in\Sscrpr$, its image $\Xscr$ in $(\Vscr,\Ascr)$ is a stable matching w.r.t. the linear preferences $>_\nu$ ($\nu\in\Vscr$), where $\Ascr=\Ascr[\Tscr]$ for a full route $\Tscr$, and for each $e\in E$, $\Xscr(e)$ consists of $x(e)$ first edges in $\Ascr(e)$ (w.r.t. $\trleft_e$).
  \end{numitem1} 

Now we extend the above construction to all stable g-allocations $x\in\Sscr$. By explanations in Sect.~\SSEC{poset_rot} (see~\refeq{gen_biject}), $x$ is associated with a closed function $\lambda=\phi(x)\in\Zset^\Pi$ on the rotational poset $(\Pi,\lessdot)$, which means that $\lambda$ does not exceed the weight function $\tau$, and for any $(R,\tau),(R',\tau')\in\Pi$, if $(R,\tau)\lessdot(R',\tau')$ and $\lambda(R',\tau')>0$, then $\lambda(R,\tau)=\tau$. The correspondence $x\mapsto\phi(x)$ gives a bijection between $\Sscr$ and the set of closed functions on $(\Pi,\lessdot)$, and such functions are interrelated to closed sets (or ideals) in $(\Pi,\lessdot)$. This can be expressed as follows, where $x\in\Sscr$ and $\lambda=\phi(x)$:
  \begin{numitem1} \label{eq:clos-clos}
\begin{itemize}
 \item[(i)] ~$x=\xmin+\sum(\lambda(R,\tau)\chi^R\colon (R,\tau)\in\Pi)$;
\item[(ii)] let $\lambda'$ be a function on $\Pi$ such that for each $(R,\tau)\in\Pi$, there holds $\lambda'(R,\tau)\in\{0,\tau\}$ if $0<\lambda(R,\tau)<\tau$, and $\lambda'(R,\tau)=\lambda(R,\tau)$ otherwise (i.e. $\lambda'$ is a sort of rounding of $\lambda$); then the function $\lambda'$ is closed as well, and $x':=\phi^{-1}(\lambda')$ is a principal g-allocation.
 \end{itemize}
  \end{numitem1}

Using this, consider $x\in\Sscr$ and $\lambda=\phi(x)$, and let $\Lambda$ be formed by all $(R,\tau)\in\Pi$ such that $0<\lambda(R,\tau)<\tau$. The principal g-allocation $x'$ closest to $x$ from below corresponds to the function $\lambda'$ on $\Pi$ such that $\lambda'(R,\tau)=0$ for $(R,\tau)\in\Lambda$, and $\lambda'(R,\tau)=\lambda(R,\tau)$ otherwise. Similarly, the principal g-allocation $x''$ closest to $x$ from above corresponds to $\lambda''$ such that $\lambda''(R,\tau)=\tau$ for $(R,\tau)\in\Lambda$, and $\lambda''(R,\tau)=\lambda(R,\tau)$ otherwise. Then
  $$
  x=x'+\sum(\lambda(R,\tau)\chi^R\colon (R,\tau)\in \Lambda) \;\;\mbox{and}
  \;\; x''=x+\sum(\tau-\lambda(R,\tau))\chi^R\colon (R,\tau)\in \Lambda).
  $$
Moreover  (in view of~\refeq{commute}), all rotations $R$ occurring in $\Lambda$ are different and pairwise edge-disjoint, and they are applicable to $x'$; let $\Rscr$ be the set of these, and for $(R,\tau)\in\Lambda$, denote the weights $\tau$ and $\lambda(R,\tau)$ by $\tau_R$ and $\lambda_R$, respectively.

Since $x''$ can be obtained from $x'$ by applying the rotations $R\in\Rscr$ with weights $\tau_R$ (in any order), there is a full route $\Tscr=(x^0,\ldots,x^N)$ containing both $x',x''$, say, $x'=x^i$ and $x''=x^j$. Then $j-i=|\Rscr|=:r$, and one can number the elements of $\Rscr$ as $R(1),\ldots,R(r)$ so that for $k=i+1,\ldots,j$, the principal g-allocation  $x^k$ is obtained from $x^{k-1}$ by applying $R(k)$ with weight $\tau_{R(k)}$. So the set $\Xscr^k$ is produced from $\Xscr^{k-1}$ by deleting $\tau_{R(k)}$ last elements related to negative edges in $R(k)$ and by adding $\tau_{R(k)}$ new elements related to positive ones. 
When, instead, we take only $\lambda_{R(k)}$ elements in these portions, over all $R(k)\in\Rscr$, we just obtain the corresponding matching $\Xscr$ in $(\Vscr,\Ascr)$ for $x$ in question. Keeping the linear preferences $\trleft_e$ for all $e\in E$, and $>_\nu$ for all $\nu\in\Vscr$, we can conclude with the following generalization of~\refeq{universalA}:
  \begin{numitem1} \label{eq:A-all_stable}
for any stable g-allocation $x\in\Sscr$, its image $\Xscr$ in $(\Vscr,\Ascr)$ is a stable matching w.r.t. the preferences $>_\nu$ ($\nu\in\Vscr)$, and for each $e\in E$, the set $\Xscr(e)$ consists of $x(e)$ first elements of $\Ascr(e)$.
  \end{numitem1}
  
Finally, let $V^0$ denote the set of vertices $v\in V$ with $\theta(v)=0$, and $E^<$ the set of edges $e\in E$ such that $|\Ascr(e)|<b(e)$. By the above construction, for each $v\in V-V^0$, the set $\Vscr_v$ ($=h^{-1}(v)$) consists of $\theta(v)$ vertices and all of them are covered by edges in $\Ascr$, whereas for each $v\in V^0$, the set $\Vscr_v$ consists of a single vertex (cf.~\refeq{split}) and this vertex is not covered by $\Ascr$. 

Now to complete the process, we fix one vertex in each $\Vscr_v$, denoted as $\omega(v)$, and extend the partial $\theta$-splitting $(\Vscr,\Ascr)$ to a required $\theta$-splitting $(\Vscr,\Escr)$ in a natural way, namely:
   \begin{numitem1} \label{eq:extendA}
for each edge $e=uv\in E^<$, connect the vertices $\omega(u)$ and $\omega(v)$ by $b(e)-|\Ascr(e)|$ parallel edges, making these edges less preferred for both $\omega(u)$ (compared with all edges in $\Ascr_{\omega(u)}$) and $\omega(v)$ (compared with all edges in $\Ascr_{\omega(v)}$), and inserting them at the end of $\trleft_e$ (thus extending the order $\trleft_e$ from $\Ascr(e)$ to $\Escr(e)$).
  \end{numitem1}

It is not difficult to conclude from~\refeq{extendA} that for each $x\in\Sscr$, the image $\Xscr=\Mscr_x$ of $x$ in $(\Vscr,\Ascr)$ constructed above continues to be a stable matching for $(\Vscr,\Escr)$, as required.

As to the last assertion in the theorem, it immediately follows from the observation that for $x,x'\in\Sscr$, if $x'$ immediately succeeds $x$ and is obtained from $x$ by applying a rotation $R$, then $h(R)$ is an ``augmenting'' cycle in $(G,>)$, whence $\Mscr_{x'}$ is a successor of $\Mscr_x$ (not necessarily an immediate one) in the lattice of stable matchings for $(G,>)$.  

This completes the proof of the theorem. \hfill$\qed\qed$


\section{Aggregated splitting} \label{sec:aggregate}

The above construction of $\theta$-splitting represented the stable g-allocations for a capacitated bipartite graph $G=(V,E)$ with integer choice functions via standard stable matchings in an incapacitated bipartite multigraph $\Gscr=(\Vscr,\Escr)$ (the ``model graph'') endowed with linear orders on (the stars of) its vertices. Here the size of $\Gscr$ may be large, as $\Gscr$ has $\sum(b(e)\colon e\in E)$ edges and at least $\sum(\theta(v)\colon v\in V)$ vertices.

In this section we try to decrease the size of representation, obtaining a ``smaller'' model graph with at most $2|E|+|V|$ vertices. This uses a reduction to stable allocations in a capacitated bipartite graph with quotas and linear orders on the vertices.

As before, let $\theta$ be as in hypotheses of Theorem~\ref{tm:split}. We will deal with a bipartite multigraph $\hat\Gscr=(\hat\Vscr=\hat\Wscr\sqcup\hat\Fscr,\hat\Escr)$ and a graph homomorphism $\hat h :\hat\Gscr\to G$ such that
  \begin{numitem1} \label{eq:hslash}
$\hat h$ sends ``workers'' to ``workers'', and ``firms'' to ``firms'', and for each edge $e=uv\in E$, the set ${\hat h}^{-1}(e)$ consists of edges between $\hat h^{-1}(u)$ and $\hat h^{-1}(v)$.
  \end{numitem1}
Besides, there are functions  $\hat b:\hat\Escr\to \Zset_+$ (of \emph{capacities}) and $\hat q:\hat\Vscr\to \Zset_+$ (of \emph{quotas}) subject to the following conditions:
  \begin{numitem1} \label{eq:cap-quota}
\begin{itemize}
 \item[(i)] for each $v\in V$, ~$\sum(\hat q(\nu)\colon \nu\in\hat h ^{-1}(v))=\max\{\theta(v),1\}$; and
 \item[(ii)] for each $e\in E$, ~$\sum(\hat b(\eps) \colon \eps\in \hat h^{-1}(e))=b(e)$.
 \end{itemize}
   \end{numitem1}

Also for each edge $e\in E$, there is a linear order $\hat\trleft_e$ on the edges in $\hat\Escr(e):=\hat h^{-1}(e)$, and for each vertex $\nu\in\hat\Vscr$, there is a linear order $\hat>_\nu$ on the set $\hat\Escr_\nu$ of edges incident to $\nu$. (Here for $\eps,\eps'\in\hat\Escr(e)$, ~$\eps\,\hat\trleft_e \,\eps'$ means that $\eps$ occurs in $\hat\Escr(e)$ earlier than $\eps'$, and for $\eps,\eps'\in \hat\Escr_\nu$, if $\eps\,\hat>_\nu \,\eps'$, then $\nu$ prefers $\eps$ to $\eps'$; cf. Definition~2 in Sect.~\SEC{split}.) We call $(\hat\Gscr,\hat h ,\hat b,\hat q,\hat\trleft,\hat>)$ (or briefly $\hat\Gscr=(\hat\Vscr,\hat\Escr)$) an \emph{aggregated} $\theta$-\emph{splitting} of $G$.

We are going to represent the stable g-allocations in $G$ via usual stable allocations in an aggregated $\theta$-splitting. This utilizes a partial $\theta$-splitting $(\Vscr,\Ascr)$ constructed in  the previous section. As before, we choose a full route $\Tscr=(\xmin=x^0,x^1,\ldots,x^N=\xmax)$ in which each $x^i$ is obtained from $x^{i-1}$ by applying a rotation $R_i$ with weight $\tau_i$. Let $\Ascr^0\subseteq\Ascr^1\subseteq\cdots \subseteq \Ascr^N=\Ascr$ be the sequence of edge sets formed in the procedure of constructing $(\Vscr,\Ascr)$. We iteratively transform this into a sequence of graphs (``partial aggregated $\theta$-splittings'') $\hat \Gscr^0=(\Vscr^0,\hat\Ascr^0), \hat\Gscr^1=(\Vscr^1,\hat\Ascr^1),\ldots,\hat\Gscr^N=(\Vscr^N,\hat\Ascr^N)$ (endowed with appropriate capacities, quotas and linear orders) as follows.

At iteration 0, for each $e\in E$ with $x^0(e)>0$, we identify (glue together) all edges in $\Ascr^0(e)$ (copies of $e$ in $\Ascr^0$) into one edge $\eps_e=\nu\eta$ and define 
  \begin{equation} \label{eq:iterat0} 
  b^0(\eps_e)=q^0(\nu)=q^0(\eta)=\varkappa^0(\eps_e):=x^0(e), 
  \end{equation}
using notation $b^0$ for capacities and $q^0$ for quotas in $\hat\Gscr^0$, and denoting by $\varkappa^0$ the allocation in $\hat\Gscr^0$ corresponding to $x^0$ (which will be automatically extended by zeros to the edges in $\hat\Ascr$ appeared on further iterations).  This results in the set $\hat\Ascr^0$ of pairwise vertex disjoint edges, and we denote the set of their endvertices by $\Vscr^0$. For $v\in V$, the set of copies (``preimages'') of $v$ in $\Vscr^0$ is denoted by $\Vscr^0_v$ (so each edge $e\in E_v$ with $x^0(e)>0$ contributes one element to $\Vscr^0_v$).
\smallskip

Next we describe $i$-th iteration for $i>0$. We assume that in its input there are: two graph homomorphisms $\hat h^{i-1}:\hat\Gscr^{i-1}\to G$  and $\hbar^{i-1}: \Gscr^{i-1}\to \hat\Gscr^{i-1}$; linear orders ${\hat\trleft}_e^{\,i-1}$ on the sets $\hat\Ascr^{i-1}(e)$  ($e\in E$), and $\hat >^{\,i-1}_\nu$ on the sets $\hat\Ascr^{i-1}_\nu$ ($\nu\in\Vscr^{i-1}$); capacities $ b^{i-1}$ on $\hat\Ascr^{i-1}$; quotas $q^{i-1}$ on $\Vscr^{i-1}$; and allocations $\varkappa^{i-1}$ on $\hat\Ascr^{i-1}$, which satisfy:
  \begin{numitem1} \label{eq:compos}
  \begin{itemize}
 \item[(i)]
the composition $\hat h^{i-1} \hbar^{i-1}$ coincides with $h^{i-1}$ (from $\Gscr^{i-1}$ to $G$);
\item[(ii)] 
the orders $\hat\trleft^{i-1}$ and $\trleft^{i-1}$ are agreeable, and similarly for $\hat>^{i-1}$ and $>^{i-1}$; formally: for $\eps,\eps'\in \Ascr^{i-1}(e)$ ($e\in E$), if $\eps\trleft^{i-1}_e \eps'$, then  $\hbar(\eps)\,\hat\trlefteq^{i-1}_e \hbar(\eps')$, and for $\eps,\eps'\in \Ascr^{i-1}_\nu$ ($\nu\in\Vscr$), if $\eps>^{i-1}_\nu\eps'$, then $\hbar(\eps)\, \hat\ge^{i-1}_{\hbar(\nu)}\, \hbar(\eps)$;
 \item[(iii)] $b^{i-1}$ is related to $\Ascr^{i-1}$, ~$ q^{i-1}$ to $\Vscr$, and $\varkappa^{i-1}$ to $x^{i-1}$; namely: $b^{i-1}(\eps)=|\hbar^{-1}(\eps)|$ for $\eps\in\hat\Ascr^{i-1}$, ~$q^{i-1}(\nu)=|\hbar^{-1}(\nu)|$ for $\nu\in\Vscr^{i-1}$, and $\sum(\varkappa(\eps)\colon \eps\in\hat h^{-1}(e))=x^{i-1}(e)$ for $e\in E$, where $\hbar$ stands for $\hbar^{i-1}$, and $\hat h$ for $\hat h^{i-1}$;
  \item[(iv)]  for $e\in E$ and $\eps,\eps'\in \hat\Ascr^{i-1}(e)$, if $\eps\,\hat\trleft^{i-1}_e \eps'$ and $\varkappa^{i-1}(\eps')>0$, then $\varkappa^{i-1}(\eps)=b^{i-1}(\eps)$.
 \end{itemize}
 \end{numitem1}
 
Take the rotation $R_i$. For a negative edge $c\in R^-_i$, consider the set $\Xi(c)$ of $\tau_i$ last edges in the subset $\Xscr^{i-1}(c)$ w.r.t. $\trleft^{i-1}_c$ (cf.~\refeq{Xcj}) and take its image $\hat\Xi(c):=\hbar^{i-1}(\Xi(c))$ in $\hat\Ascr^{i-1}(c)$. The next set $\Xscr^i(c)$ is obtained from $\Xscr^{i-1}(c)$ by removing $\Xi(c)$; therefore, at $i$-th iteration we should accordingly decrease the values of allocation $\varkappa$ on $\hat\Xi(c)$. More precisely, let $\xi_1,\ldots,\xi_k$ be the sequence of edges in $\hat\Xi(c)$ (following the order $\hat\trleft^{i-1}_c$). One can see that 
  \begin{numitem1} \label{eq:kappa-b}
  \begin{itemize}
\item[(i)]  $\Delta:=\varkappa^{i-1}(\xi_1)+\varkappa^{i-1}(\xi_2)\cdots+\varkappa^{i-1}(\xi_k)\ge\tau_i>  \varkappa^{i-1}(\xi_2)+\cdots \varkappa^{i-1}(\xi_k)$; and
 \item[(ii)]
$\varkappa^{i-1}(\xi_j)= b^{i-1}(\xi_j)$ for $j=1,\ldots,k-1$ ~(in view of~\refeq{compos}(iv)).
 \end{itemize}
 \end{numitem1}
Then we should assign 
  \begin{numitem1} \label{eq:xi_j}
~$\varkappa^i(\xi_j):=0$ for $j=2,\ldots,k$, and $\varkappa^i(\xi_1):=\varkappa^{i-1}(\xi_1)-(\Delta-\tau_i)$.
  \end{numitem1}
  
As a result, keeping the preimages of $c$, i.e. setting $\hat\Ascr^i(c):=\hat\Ascr^{i-1}(c)$, we obtain the desired equality $\sum(\varkappa^i(\eps)\colon \eps\in  \hat\Ascr^i(c))=x^i(c)$. At the same time, we preserve the capacities of all edges in $\hat\Xi(c)$; namely, $ b^i(\xi_j):=b^{i-1}(\xi_j)$ for all $j$.

Next, assuming that all negative edges in $R_i$ are already treated, we examine positive edges in $R_i$. Let $c,a,c'$ be consecutive edges in $R_i$, where $c,c'\in R^-_i$, ~$a=wf\in R^+_i$, and the vertices $w\in W$ and $f\in F$ are incident to $c$ and $c'$, respectively. The sets $\hat\Ascr^i(c),\hat\Ascr^i(c')$ and the function $\varkappa^i$ within the preimages of $c$ and $c'$ by $\hat h^i$ have already been assigned, and now our aim is to construct the set $\hat\Ascr^i(a)$ and function $\varkappa^i$ on the preimages of $a$ by $\hat h^i$. We use the set $\Psi(a)$ of (last) $\tau_i$ edges in $\Ascr^i(a)$ created by connecting the corresponding $\Wscr$-vertices in $\Xscr^{i-1}(c)$ and $\Fscr$-vertices in $\Xscr^{i-1}(c')$.

To construct the image $\hat\Psi(a)$ of $\Psi(a)$ by $\hbar^i$, consider the corresponding sequences $\hat\Xi(c)=(\xi_1,\ldots,\xi_k)$ (the image of $\Xi(c)$ by $\hbar^{i-1}$ ordered by $\hat\trleft^{i-1}_c$) and $\hat\Xi(c')=(\xi'_1,\ldots,\xi'_m)$ (the image of $\Xi(c')$ by $\hbar^{i-1}$ ordered by $\hat\trleft^{i-1}_{c'}$).

For $j\in [k]$, denote: the $\Wscr$-vertex of $\xi_j$ by $\nu_j$; the value $\varkappa^{i-1}(\xi_j)$ by $\varkappa_j$; the value $\varkappa_j+\cdots+\varkappa_k$ by $\alpha_j$; and the value $\varkappa_{j+1}+\cdots+\varkappa_k$ by $\beta_j$ (where $[k]$ denotes $\{1,\ldots,k\}$).

Similarly, for $r\in [m]$, denote: the $\Fscr$-vertex of $\xi'_r$ by $\eta_r$; the value $\varkappa^{i-1}(\xi'_r)$ by $\varkappa'_r$; the value $\varkappa'_r+\cdots+\varkappa'_m$ by $\alpha'_r$; and the value $\varkappa'_{r+1}+\cdots+\varkappa'_m$ by $\beta'_r$.

For $j\in[k]$ and $r\in[m]$, we assign new edge $\hat\psi_{jr}$ connecting $\nu_j$ and $\eta_r$ if and only if
  \begin{numitem1} \label{eq:jr-edge}
  \begin{itemize}
\item[(a)] ~$\Delta_{jr}:=\min\{\alpha_j,\alpha'_r\}-\max\{\beta_j,\beta'_r\}>0$, in case $j+r>2$;
 \item[(b)] ~$\Delta_{11}:=\tau_i-\max\{\beta_1,\beta'_1\}$.
  \end{itemize}
  \end{numitem1}
  
Let $P=P^i(a)$ be the set of such pairs $jr$. One can check that~\refeq{jr-edge} implies $\sum(\Delta_{jr}\colon jr\in P)=\tau_i$. Using this, we assign $\{\hat\psi_{jr}\colon jr\in P\}$ as the desired set $\hat\Psi(a)$, which is regarded as the image of $\Psi(a)$ by $\hbar^i$, and the preimage of $a$ by $\hat h^i$. Accordingly, for each $jr\in P$, we assign $\varkappa^i(\hat\psi_{jr}):=\Delta_{jr}$.

Note also that from~\refeq{jr-edge} it follows that for any $jr,j'r'\in P$, if $j<j'$ then $r\le r'$, and if $r<r'$ then $j\le j'$. This prompts the linear order on $\hat\Psi(a)$ defined by $\psi_{jr}\,\hat \trleft^i_a \,\psi_{j'r'}$ if $j+r<j'+r'$; this is agreeable with the linear order $\trleft^i_a$ on $\Psi(a)$. Like adding $\Psi(a)$ to $\Ascr^{i-1}(a)$, we should add $\hat\Psi(a)$ to the end of $\hat\Ascr^{i-1}(a)$, forming the next $\hat\Ascr^i(a)$ and $\hat\trleft^i_a$. 

Doing so for all positive edges $a$ of $R_i$, we eventually obtain the desired set $\hat\Ascr^i$, function $\varkappa^i$ on $\hat\Ascr^i$, linear orders $\hat\trleft^i_e$, $e\in E$, and maps $\hat h^i$ and $\hbar^i$, maintaining the corresponding properties in~\refeq{compos}. Emphasize that the above construction does not change the vertex set; namely, we have $\Vscr^0=\Vscr^1=\cdots=\Vscr^N$. 

Another important fact is that for each vertex $\nu\in\hat\Vscr$, the sum of values of $\varkappa^i$ over the set $\hat\Ascr^i_\nu$ of edges in $\hat\Ascr^i$ incident to $\nu$ is equal to the sum of values of $\varkappa^{i-1}$ over $\hat\Ascr^{i-1}_\nu$. This leads to preserving the quotas: $q^0(\nu)=q^1(\nu)=\cdots=q^N(\nu)$, yielding $\sum(\varkappa^i(\eps)\colon \eps\in\hat\Ascr^i_\nu)= q^N(\nu)$ and $\sum(q^N(\nu)\colon \nu\in \Vscr^N,\, \hat h^N(\nu)=v)=\theta(v)$ for all $v\in V$ with $\theta(v)>0$.

As to assigning the linear orders $\hat >^i_\nu$ on $\hat\Ascr^i_\nu$, $\nu\in\hat\Vscr$, we follow the rule similar to that concerning $>^i$ on $\Ascr^i$. Namely, if $\hat\Ascr^i_\nu$ acquires a new edge $\hat\psi$ (concerning some $a\in R^+_i$), we make $\hat\psi$ less (resp. more) preferred compared with any old edge contained in $\hat\Ascr^{i-1}_\nu$ when $\nu$ is in $\Wscr$ (resp. $\Fscr$). And the preferences by $\hat >^i_\nu$ between new edges $\eps,\eps'\in\hat\Ascr^i(e)$ sharing the same vertex $\nu$ depends on their orders by $\hat\trleft^i_a$; namely, in case $\eps\, \hat\trleft^i_a\, \eps'$, we assign $\eps\, \hat >^i_\nu\, \eps'$ when $\nu\in \Wscr$, and $\eps'\,\hat >^i_\nu\, \eps$ when $\nu\in \Fscr$.

Upon finishing the last, $N$-th, iteration of the above process, we obtain the desired partial $\theta$-splitting $(\hat \Gscr^N=(\hat \Vscr,\hat\Ascr^N),\hat h^N,b^N,q^N, \hat\trleft^N, \hat>^N)$. Extending each intermediate function $\varkappa^i$ ($0\le i\le N$) by zeros on $\hat\Ascr^N-\hat\Ascr^i$ (where $\hat\Ascr^i\subseteq\hat\Ascr^N$), one can conclude with the following: for $i=0,\ldots,N$,
  \begin{numitem1} \label{eq:lastN}
  \begin{itemize}
\item[(i)]  $\varkappa^i$ is a stable allocation in the multigraph $\hat\Gscr^N$ endowed with the capacities $b^N$, quotas $q^N$ and preference orders $\hat >^N$;
  \item[(ii)] for each edge $e\in E$, the sum of values $\varkappa^i(\eps)$ over the set
of preimages $\eps$ of $e$ by $\hat h^N$ (i.e. over $\hat\Ascr^N(e)$) is equal to $x^i(e)$, and the following ``saturation'' property on capacities is valid: for $\eps,\eps'\in\hat\Ascr^N(e)$, if $\eps\, \hat\trleft^N_e\, \eps'$ and $\varkappa^N(\eps')>0$, then $\varkappa^N(\eps)=b^N(\eps)$.  
  \end{itemize}
  \end{numitem1}
  
Here a proof is based on the above explanations and utilizes ideas from the proof of Lemma~\ref{lm:xi-Xi} for the (non-aggregated) construction of partial $\theta$-splitting $(\Vscr,\Ascr^N)$. Next, using an approach involving ``commuting rotations'', one can extend property~\refeq{universalA} to all principal g-allocations in the aggregated case, and further, using ``interpolations'' as in the proof of~\refeq{A-all_stable}, one can extend the construction to the set $\Sscr$ of all g-allocation in $G$. This gives a representation of all stable $x\in\Sscr$ via corresponding stable allocations for $(\hat\Gscr^N,\hat h^N,b^N,q^N,\hat>^N)$, denoted as $(\hat h^N)^{-1}(x)$. We leave details to the reader.

Finally, to obtain a representation of $\Sscr$ in a ``full'' aggregated $\hat\Gscr=(\hat\Vscr,\hat\Escr)$, we should take into account the original capacities $b$ on $E$ and  the vertices in $V^0$ (where $\theta$ is zero). 

More precisely, for a vertex $v$ of $G$, if $v\in V-V^0$, then the set $\hat h^{-1}(v)$ of preimages of $v$ in $\hat\Vscr$ is naturally assigned to be $(\hat h^N)^{-1}(v)$. And each $v\in V^0$ (if exists) generates a single vertex $\nu$; then $\hat h^{-1}(v)=\{\nu\}$. This gives the set $\hat\Vscr:=\hat h^{-1}(V)$. The quotas $\hat q$ on $\hat\Vscr$ are assigned by extending $q^N$ by zeros on $\hat\Vscr-\Vscr^N$. 

As to edges $e=uv\in E$, two cases are possible. If $\hat\Ascr^N(e)\ne \emptyset$, then the set of preimages of $e$ in $\hat\Gscr$ preserves: $\hat\Escr(e):=\hat\Ascr^N(e)$, as well as the linear order on it, and we assign the capacities $\hat b(\eps)$ of edges $\eps\in\hat\Escr(e)$ by possibly increasing the capacity of the last edge (w.r.t. $\hat \trleft_e$) and preserving those in the rest, providing the equalities $\sum(\hat b(\eps)\colon \eps\in\hat\Escr(e))=b(e)$ and $\hat b(\eps):=b^N(\eps)$ if $\eps$ is not the last in $\hat\Escr(e)$ (taking into account that, by the construction, $\sum(b^N(\eps)\colon \eps\in\hat\Ascr^N(e))$ is equal to the maximum value of $x(e)$ over $x\in\Sscrpr$). And if $\hat\Ascr^N(e)= \emptyset$, then $e$ generates a single vertex $\eps$ connecting (arbitrarily chosen) a vertex in $\hat h^{-1}(u)$ with a vertex in $\hat h^{-1}(v)$, which is assigned with the least (current) preference for each of these vertices. 

Analyzing the transformation of $(\hat\Gscr^N,\hat h^N,b^N,q^N,\hat>^N)$ into $(\hat\Gscr,\hat h,\hat b,\hat q,\hat>)$, one can realize that for each stable $x\in\Sscr$, its image $\varkappa:=(\hat h^N)^{-1}(x)$ in the former (partial) $\theta$-splitting, extended by zeros on $\hat\Escr-\hat\Ascr^N$, is stable in the latter one. (Here we also take into account that by the stability of $x$ no edge in $G$ connects two vertices in $V^0$.) Summing up the above reasonings, one can obtain
  \begin{theorem} \label{tm:aggregate_split}
Let $(\hat\Gscr=(\hat\Vscr,\hat\Escr),\hat h,\hat b,\hat q, \hat>)$ be the aggregated $\theta$-splitting constructed above. For $x\in\Zset^E_+$, define the function $\varkappa=\varkappa_x$ on $\hat\Escr$ so as to satisfy, for each $e\in E$, the equality $\sum(\varkappa(\eps)\colon \eps\in \hat\Escr(e))=x(e)$ and the ``capacity saturation'' condition similar to that in~\refeq{lastN}(ii), namely:
  \begin{itemize}
\item[\rm($\ast$)] for $\eps,\eps'\in\hat\Escr(e)$, if $\eps\,\hat\trleft_e\, \eps'$ and $\varkappa(\eps')>0$, then $\varkappa(\eps)=\hat b(\eps)$.
  \end{itemize}
Then the correspondence $x\mapsto \varkappa_x$, sends each stable g-allocation for $(G,b,C)$ to a stable allocation for $(\hat\Gscr,\hat b,\hat q, \hat>)$. Also the constructed multigraph $\hat\Gscr$ has at most $2|E|$ vertices (assuming that $G$ has no isolated vertices). Furthermore, if $x$ precedes $y$ in the lattice of stable g-allocations, then $\varkappa_x$ precedes $\varkappa_y$ in the lattice of stable allocations.
   \end{theorem}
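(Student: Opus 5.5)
The natural route is to derive Theorem~\ref{tm:aggregate_split} from Theorem~\ref{tm:split} by a quotient argument: show that the aggregated structure $\hat\Gscr$ is obtained from the non-aggregated $\theta$-splitting by gluing edges and vertices, and that the image of a stable matching under this gluing is a stable allocation. More precisely, I would first set up a map $\hbar:\Gscr\to\hat\Gscr$ extending the maps $\hbar^N$ built in the iterations (adding the extra parallel edges of~\refeq{extendA} and the vertices of $V^0$), and check properties~\refeq{compos}(i)--(iii) at the final stage. These say that $\hat h\hbar=h$, that the orders $\trleft$ and $>$ are mapped monotonically onto $\hat\trleft$ and $\hat>$, that $\hat b(\eps)=|\hbar^{-1}(\eps)|$, and that $\hat q(\nu)=|\hbar^{-1}(\nu)|$. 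The first step is then to show $\varkappa_x(\eps)=|\Mscr_x\cap\hbar^{-1}(\eps)|$ for each stable $x$. This uses that $\Mscr_x(e)$ is an initial segment of $\Escr(e)$ under $\trleft_e$ (as in~\refeq{A-all_stable}), together with monotonicity of $\hbar$ on $\trleft_e$. Monotonicity forces the image of an initial segment to saturate all earlier aggregated edges, which is exactly condition ($\ast$), and ($\ast$) determines $\varkappa_x$ uniquely.

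Second, I would prove the stability of $\varkappa_x$. Quota feasibility follows because $\Mscr_x$ is a matching covering, for $\theta(v)>0$, all $\theta(v)$ copies of $v$ (by~\refeq{Xi-matching}), so the total at $\nu$ is at most $|\hbar^{-1}(\nu)|=\hat q(\nu)$. For blocking, suppose $\eps=\nu\eta$ blocks $\varkappa_x$: $\varkappa_x(\eps)<\hat b(\eps)$, and each endpoint is either unsaturated or has positive value on an edge it likes less. Then pick an unused preimage $\tilde\eps\in\hbar^{-1}(\eps)-\Mscr_x$ and lift the less-preferred used edges to matching edges at the corresponding copies. Since $\hbar$ respects preferences, I expect $\tilde\eps$ to block $\Mscr_x$, contradicting Theorem~\ref{tm:split}.

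The main obstacle is this lifting. Preimages of a single aggregated vertex $\nu$ are several copies, and the specific copy incident to $\tilde\eps$ need not be the copy carrying the worse used edge. To handle it I would exploit the explicit structure of~\refeq{jr-edge}. The new edges $\hat\psi_{jr}$ are created in a staircase (monotone in $j$ and $r$), and their preferences are assigned consistently with $\hat\trleft_a$. From this I would prove, by induction over iterations as in Lemma~\ref{lm:xi-Xi}, an aggregated version of its key claim: at an $\Fscr$-vertex $\nu$, every edge with positive $\varkappa^i$ is preferred to every edge created earlier; at a $\Wscr$-vertex, every edge with positive value is preferred to every edge created later. With this, the same creation-time argument as in Lemma~\ref{lm:xi-Xi}(b) applies directly to $\hat\Gscr$, avoiding copies altogether, first for the $x^i$ of a full route. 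It then extends to all principal $x$ by the neighboring-routes argument giving~\refeq{universalA}, and to all stable $x$ by the interpolation giving~\refeq{A-all_stable}. The added edges of the final step are least preferred at both ends, and each is unsaturated only at a vertex of $V^0$ or at a saturated vertex. Hence they cannot block, using that no edge of $G$ joins two vertices of $V^0$.

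Finally, the remaining assertions are direct. The vertex bound holds because $\Vscr^0$ has at most two vertices per edge $e$ with $x^0(e)>0$, the vertex set never changes during the iterations, and each $v\in V^0$ adds one vertex; $v\in V^0$ has no incident edge with $\xmin(e)>0$. With no isolated vertices, these extra vertices can be charged to incident edges with $\xmin(e)=0$, giving at most $2|E|$. For monotonicity, if $x\prec_F y$ then $y$ is reached from $x$ by a route of increasing rotations. Each rotation step moves mass on each $\Fscr$-vertex of $\hat\Gscr$ from an edge to a more preferred one, by rule~\refeq{lin_order} and its aggregated analogue. Hence every firm-copy weakly prefers $\varkappa_y$, which is the order on stable allocations.
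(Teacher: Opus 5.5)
Your plan follows the paper's own sketch: an aggregated analogue of Lemma~\ref{lm:xi-Xi} along a full route, then neighboring full routes, then interpolation, then the final extension. The paper leaves exactly these details to the reader. You also found the real danger, namely that an aggregated vertex is a union of several copies. The gap is the claim you use to avoid it: that at an $\Fscr$-vertex every edge with positive $\varkappa^i$ is preferred to every edge created before iteration $i$. This claim is false, and the staircase structure of \refeq{jr-edge} cannot rescue it, because the trouble is not among edges created in the same iteration. Copies of a vertex that are glued at iteration~0 can later have different histories. An aggregated firm-vertex may then still use an old edge while ranking above it a later-created edge that has since been emptied. When that happens both the creation-time argument and the lifting in your first paragraph fail. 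In the lifting, the unused preimage of the blocking edge sits at a copy that holds a better edge, while the worse used edge is held by a different copy.

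Here is a counterexample. Let $W=\{w_1,w_2,w_3\}$ and $F=\{f_1,f_2,f_3\}$. The edges are $e=w_1f_1$ with $b(e)=2$, and $a=w_2f_1$, $d=w_3f_1$, $g=w_1f_2$, $h=w_2f_2$, $k=w_2f_3$, $l=w_3f_3$, each of capacity $1$. Use linear orders with quotas as in Example~1: $e>g$ at $w_1$ (quota $2$); $h>a>k$ at $w_2$, $l>d$ at $w_3$, $g>h$ at $f_2$, $k>l$ at $f_3$ (quotas $1$). At $f_1$ (quota $2$), let $C_{f_1}$ pick units greedily in the order $d$, first unit of $e$, $a$, second unit of $e$; this is substitutable and quota filling. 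A direct check shows that $\Sscr$ is the chain $x^0\prec_F x^1\prec_F x^2$, where (all other values $0$):
\begin{itemize}
\item[] $x^0(e)=2$ and $x^0(h)=x^0(l)=1$;
\item[] $x^1(e)=x^1(g)=x^1(a)=x^1(l)=1$;
\item[] $x^2(e)=x^2(g)=x^2(d)=x^2(k)=1$.
\end{itemize}
The two steps are the rotations $(a,e,g,h)$ and $(d,a,k,l)$, each with weight $1$.

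Now run the aggregated construction. The only vertex $\nu$ over $f_1$ is the end of $\eps_e$, with $\hat q(\nu)=2$, and the only vertex $\mu$ over $w_2$ is the end of $\eps_h$. Iteration~1 creates the aggregated copy $\hat\psi_a=\mu\nu$, ranked above $\eps_e$ at $\nu$. Iteration~2 moves that unit to the new top edge $\hat\psi_d$ at $\nu$, and creates $\hat\psi_k$ at $\mu$, ranked below $\hat\psi_a$. So in $\varkappa_{x^2}$ we have $\varkappa(\hat\psi_a)=0<1=\hat b(\hat\psi_a)$. At the same time, $\nu$ uses $\eps_e$, which it ranks below $\hat\psi_a$, and $\mu$ uses $\hat\psi_k$, which it ranks below $\hat\psi_a$. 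Hence $\hat\psi_a$ blocks $\varkappa_{\xmax}$. In the non-aggregated $\Gscr$ nothing goes wrong, because the unused copy of $a$ meets the copy of $f_1$ that holds the copy of $d$.

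Consequently, no argument can close this step for the construction as described, and the paper's sketch has the same hole. A correct version would need a finer aggregation, for example gluing only copies whose histories along the route coincide. Your lifting argument would be the natural proof there, but the $2|E|$ vertex bound would need a new justification. Your monotonicity paragraph inherits the same defect. At $\nu$, shifting a unit from $\hat\psi_a$ to $\hat\psi_d$ does not make $\varkappa_{x^2}$ weakly preferred to $\varkappa_{x^1}$, since the worse edge $\eps_e$ is still in use.
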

 
Here the bound on $|\hat\Vscr|$ is seen from the observations that each edge of $G$ has at most one copy in the initial graph $\hat\Gscr^0$ and that each vertex from $V^0$ has a single copy in $\hat\Vscr$. Also we can roughly estimate the number of edges of $\hat\Gscr$ as $O(N^2|E|)$, where, as before, $N$ is the length of a full route for $(G,b,C)$ (which does not exceed $\bmax|E|$). To see this, for $i=0,\ldots,N$ and $e\in E$, let $m^i(e)$ denote the number of edges $\eps\in (\hat h^i)^{-1}(e)$ such that $\varkappa^i(\eps)>0$, and let $m^i:=\sum(m^i(e)\colon e\in E)$. In particular, $m^0\le|E|$. For $i\in[N]$, consider three consecutive edges $c,a,c'$ in the rotation $R_i$, where $c,c'$ are negative. At $i$-th iteration, the values of $\varkappa$ do not increase on the preimages of $c$ and $c'$ (cf.~\refeq{xi_j}) and increase on the new preimages of $a$, whence $\delta(d):=m^i(d)-m^{i-1}(d)$ is nonpositive for $d=c,c'$, and positive for $d=a$. Moreover, one can see from the construction (using~\refeq{jr-edge}) that $\delta(c)+\delta(c')+\delta(a)\le 1$. This implies that $m^i-m^{i-1}$ does not exceed $|R^-_i|$, or $|E|/2$. By induction on $i$, we obtain $m^i<i|E|$. Note also that by the construction each edge $\eps\in \hat\Ascr^N$ satisfies $\varkappa^i(\eps)>0$ for at least one $i$. Therefore, $|\hat\Ascr^N|\le m^0+m^1+\cdots+m^N< N^2|E|$, and the required estimate follows.

\end{document}